\newtheorem{theorem}{Theorem}[section]
\newtheorem{lemma}[theorem]{Lemma}
\newtheorem{proposition}[theorem]{Proposition}
\newtheorem{prop}[theorem]{Proposition}
\newtheorem{definition}[theorem]{Definition}
\newtheorem{corollary}[theorem]{Corollary}
\theoremstyle{definition}
\newtheorem{defn}[theorem]{Definition}
\newtheorem{eg}[theorem]{Example}
\newcommand{\N}{{\mathbb N}}
\newcommand{\PP}{{\mathbb P}}
\newcommand{\W}{\mathcal{W}}
\newcommand{\E}{\mathcal{E}}
\newcommand{\Gonered}{\varGamma_{1_{red}}}
\newcommand{\Goneblue}{\varGamma_{1_{blue}}}
\newcommand{\Gtwored}{\varGamma_{2_{red}}}
\newcommand{\Gtwoblue}{\varGamma_{2_{blue}}}
\DeclareMathOperator{\Tan}{Tan}
\DeclareMathOperator{\maxdeg}{maxdeg}
\DeclareMathOperator{\supp}{supp}
\begin{document}
\title[Combinatorial degree bound for toric ideals of hypergraphs]{Combinatorial degree bound for toric ideals of hypergraphs}
\author[Gross and Petrovi\'c]{Elizabeth Gross and Sonja Petrovi\'c}

\begin{abstract}
Associated to any hypergraph is a toric ideal encoding the algebraic relations among its edges. We study these ideals and the combinatorics of their minimal generators, and derive  general degree bounds for   both uniform and non-uniform hypergraphs in terms of balanced hypergraph bicolorings, separators, and splitting sets. In turn, this provides  complexity bounds for algebraic statistical models associated to hypergraphs. As two main applications, we recover a  well-known complexity result for Markov bases of arbitrary $3$-way tables, and we  show that the defining ideal of  the tangential variety is generated by quadratics and cubics in cumulant coordinates. 
\end{abstract}

\maketitle


\section{Introduction}
\label{sec:intro}

The edge subring of a hypergraph $H$ is the monomial subalgebra parametrized by the edges of $H$.  We derive a general degree bound for the minimal generators of its defining ideal, $I_H$, in terms of the structure of the underlying hypergraph. 

Let $H$ be a hypergraph on  $V=\{1,\dots, n\}$  with edge set $E$.  Each edge $e_i\in E$ of size $d$ encodes a squarefree monomial
$
	x^{e_i}:=\prod_{j\in e_i} x_j
$
of degree $d$ in the polynomial ring $k[x_1,\dots,x_n]$.
The \emph{edge subring} of the hypergraph $H$, denoted by $k[H]$, is the following monomial subring:
\[
	k[H] := k[x^{e_i} : e_i \in E(H)].
\]
The \emph{toric ideal of $k[H]$}, denoted $I_H$,  is the kernel of the monomial map $\phi_H : k[t_{e_i}]  \to k[H]$ defined by 
$ \phi_H(t_{e_i})=x^{e_i}$.
The ideal $I_H$  encodes the algebraic relations among the edges of the hypergraph. For the special case where $H$ is a graph, generating sets of the toric ideal of $k[H]$ have been studied combinatorially in \cite{OH, OH2005}, \cite{RTT}, \cite{TT}, and \cite{Vill95, Vill}.

The motivation for studying toric ideals $I_H$ is threefold. First,  explicit results that relate $I_H$ to combinatorial properties of $H$  have existed only for graphs. Second, $I_H$ is related to the Rees algebra $\mathcal R(\mathcal I(H))$ of the edge ideal of $H$: in case when $H$ is a graph, the presentation ideal of $\mathcal R(\mathcal I(H))$  is completely determined by generators of $I_H$ and the syzygies of the edge ideal $\mathcal I(H)$. Third, these toric ideals correspond to Markov bases for algebraic statistical models; this connection is outlined at the end of the Introduction. 
A starting point of our work is the fact that combinatorial signatures of generators of $I_H$ are balanced edge sets of $H$. Balanced edge sets on  uniform hypergraphs were introduced in \cite{PS}, and are referred to as  monomial walks.

This paper is based on the fact that the ideal $I_H$ is generated by binomials $f_{\mathcal E}$ arising from primitive balanced edge sets $\mathcal E$ of ${H}$ (See Proposition~\ref{prop:balanced-non-uniform}, a generalization of \cite[Theorem 2.8]{PS}). A \emph{balanced edge set of $H$} is a multiset of bicolored edges $\mathcal E=\mathcal E_{blue} \sqcup \mathcal E_{red}$  satisfying the following balancing condition: 
for each vertex $v$ covered by $\mathcal E$, the number of red edges containing $v$ equals the number of blue edges containing $v$, that is, 
\[	\label{degreeCondition}\tag{*}
	\deg_{blue} (v) = \deg_{red} (v).
\]
A binomial $f_\mathcal E$  arises from $\mathcal E$ if it can be written as 
\[ 
f_\mathcal E=\prod_{e\in \mathcal E_{blue}} t_e - \prod_{e'\in\mathcal E_{red}} t_{e'}.
\]

 Note that while $H$ is a simple hypergraph (it contains no multiple edges), $\mathcal E$ allows repetition of edges. 
In addition, the balanced edge set $\E$ is \emph{primitive} if there exists no other balanced edge set $\E'=\E'_{blue}\sqcup\E'_{red}$ such that $\E'_{blue}\subsetneq \E_{blue}$ and $\E'_{red}\subsetneq \E_{red}$; this is the usual definition of an element in the Graver basis of $I_H$. If $H$ is a uniform hypergraph, a balanced edge set is called a \emph{monomial walk} to conform with the terminology in \cite{Vill95, Vill} and \cite{PS}.

In what follows, we give two general degree bounds for generators of $I_H$ (Section~\ref{sec:uniform}), study the combinatorics of splitting sets and reducibility (defined in Section~\ref{sec:splits-and-reducibility}), and explore implications to algebraic statistics throughout. 
Section~\ref{sec:indispensable} focuses on indispensable binomials, i.e. binomials that are members of every minimal generating set of $I_H$.  Proposition \ref{prop:indispensable} gives a combinatorial sufficient condition for determining whether a binomial $f \in I_H$ is indispensable. Consequently, the Graver basis is the unique minimal generating set of $I_H$  for any $2$-regular hypergraph (Proposition \ref{prop:2regular}).  In particular, this means that the Graver basis is equal to the universal Gr\"{o}bner basis, although the defining matrix need not be unimodular. 

 Theorem~\ref{thm:uniformDegreeBound} is a combinatorial criterion for the ideal  of a uniform hypergraph to be generated in degree at most $d\geq 2$. 
The criterion is based on decomposable balanced edge sets, separators, and splitting sets; see Definitions~\ref{defn:walkReducible} and~\ref{defn:splittingSet}.  
Our result generalizes the well-known criterion for the toric ideal of a graph to be generated in degree $2$ from  \cite{OH} and \cite{Vill95, Vill}. 
Splitting sets translate and extend the constructions used in \cite{OH} and \cite{Vill95, Vill}  to hypergraphs and arbitrary degrees. 
 Theorem~\ref{thm:nonuniform} provides  a more general result for non-uniform hypergraphs.

In algebraic statistics, any log-linear statistical model corresponds to a toric variety whose defining ideal gives a Markov basis for the model (cf. Fundamental Theorem of Markov bases \cite{DiacSturm}, \cite{algStatBook}). 
Since these varieties, by definition, have a monomial parametrization, we can also associate to any log-linear model $\mathcal M$ with a square-free parameterization a (non-uniform) hypergraph $H_\mathcal M$.  
By Proposition~\ref{prop:balanced-non-uniform}, Markov moves for the model $\mathcal M$ are  described by balanced edge sets of $H_\mathcal M$: if $\E$ is a balanced edge set of $H_\mathcal M$, then a Markov move on a fiber of the model  corresponds to replacing the set of red edges in $\E$ by the set of blue edges in $\E$. 
Our degree bounds give a bound for the \emph{Markov complexity} (Markov width) of the model $\mathcal M$. 
For general references on Markov complexity of classes for some log-linear models, the reader should refer to  \cite{DLO}, \cite{DS}, \cite[Chapter 1, \S 2]{algStatBook} and \cite{HSull}. 

We apply our combinatorial criteria to recover a well-known result in algebraic statistics from \cite{DLO} in Corollary~\ref{cor:SlimTables}. Finally,  
we study the Markov complexity of a set of models from \cite{SZ} called hidden subset models. Namely, Theorem~\ref{thm:cumulants} says that the ideal associated to the image of $\Tan((\PP^1)^n)$ in higher cumulants is generated by quadratics and cubics.


\section{Preliminaries and notation}

We remind the reader that all hypergraphs in this paper are simple, that is, they contain no multiple edges. In contrast, balanced edge sets of hypergraphs are not, since the binomials arising from the sets need not be squarefree. 
Therefore, for the purpose of this manuscript, we will refer to a balanced edge set  as a \emph{multiset} of edges, with implied vertex set; and, as usual, $V(\E)$ denotes the vertex set contained in the edges in $\E$.

For the remainder of this short section, we will clear  the technical details and notation we need for the proofs that follow. 

A multiset, $M$, is an ordered pair $(A, f)$ such that $A$ is a set and $f$ is a function from $A$ to $\N_{>0}$ that records the multiplicity of each of the elements of $A$.  
For example,  the multiset $M=(\{1,2\}, f)$ with $f(1)=1$ and $f(2)=3$ represents $M=\{1, 2, 2, 2\}$ where ordering doesn't matter.  We will commonly use the latter notation.

Given a multiset $M=(A, f)$,  \emph{the support of $M$} is $\supp (M):=A$, and its size is $|M|:= \sum_{a \in A} f(a)$. For two multisets $M_1=(A, f_1)$ and $M_2=(B, f_2)$, we say $M_2 \subseteq M_1$ if $B \subseteq A$ and for all $b \in B$, $f_2(b) \leq f_1(b)$.   
$M_2$ is a proper submultiset of $M_1$ if $B \subsetneq A$, or there exists a $b \in B$  such that $f_2(b)<f_1(b)$.

Unions, intersections, and relative complements of multisets are defined in the canonical way:

\begin{align*}
M_1 \cup M_2 &:=(A \cup B, g) \text{ where } g(a)= \begin{cases}
      f_1(a) & \text{ if } a \in A \setminus B, \\
      f_2(a) & \text{ if } a \in B \setminus A, \\
      \max(f_1(a), f_2(a)) & \text{ if } a \in A \cup B;
\end{cases}  \\	
\end{align*}
\begin{align*}
M_1 \cap M_2 &:=(A \cap B, g) \text{ where } g(a)= \min(f_1(a), f_2(a));  \\
\end{align*}
\begin{align*}
M_1 - M_2 &:=(C, g) \text{, where } g(a)=\begin{cases}
      f_1(a)& \text{ if } a \in A\setminus B, \\
      f_1(a)-f_2(a)& \text{otherwise}.
\end{cases}\\
&\text{and } C=A\setminus B \cup \{a \in A\cap B \ | \ f_1(a)-f_2(a)>0\}
\end{align*}

Note that the support of the union (intersection) of two multisets is the union (intersection) of their supports. 
Finally, we define a \emph{sum} of $M_1$ and $M_2$:

\[
	M_1 \sqcup M_2:=(A \cup B, g) \text{ where } g(a)=\begin{cases}
	f_1(a) & \text{ if } a\in A\setminus B, \\
	f_2(a) & \text{ if } a \in B \setminus A\\
	f_1(a)+f_2(a) \text{ if } a \in A \cap B
	\end{cases}.
\]

If $M_{1}\sqcup M_{2}$ is a balanced edge set, then the notation $M_{1}\sqcup_b M_{2}$ will be used to record the bicoloring of $M_1\sqcup M_2$: edges in $M_1$ are blue, and edges in $M_2$ are red. 

Finally, the number of edges in a hypergraph $H$ containing a vertex $v$ will be denoted by $\deg(v;H)$. For a bicolored multiset $M:=M_{blue}\sqcup_m M_{red}$, the blue degree $\deg_{blue} (v;M)$ of a vertex $v$ is defined to be $\deg(v;M_{blue})$.  The red degree  $\deg_{red} (v; M)$ is defined similarly.

\section{Splitting sets and reducible edge sets}
\label{sec:splits-and-reducibility}

The aim of this section is to lay the combinatorial groundwork for studying toric ideals of hypergraphs. In particular, we explicitly state what it combinatorially means for a binomial arising from a monomial walk to be generated by binomials of a smaller degree. 
We begin by describing the binomial generators of $I_H$.  Unless otherwise stated, $H$ need not be uniform.  

\begin{prop}\label{prop:balanced-non-uniform}
	Every binomial in the toric ideal of a hypergraph corresponds to a balanced edge set.  In particular, the toric ideal $I_H$ is generated by  primitive balanced edge sets. 
\end{prop}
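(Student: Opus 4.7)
The plan is to translate the question into a calculation in the polynomial ring $k[t_{e_i}]$ and then invoke standard facts about toric ideals.

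First I would set up the bookkeeping between monomials in $k[t_{e_i}]$ and multisets of edges. Every monomial $\prod_i t_{e_i}^{a_i}$ canonically encodes a multiset of edges in which $e_i$ has multiplicity $a_i$. Thus an arbitrary binomial $f \in k[t_{e_i}]$ can be written (after absorbing the sign) as
\[
f_\E = \prod_{e \in \E_{blue}} t_e \;-\; \prod_{e' \in \E_{red}} t_{e'},
\]
where $\E = \E_{blue} \sqcup \E_{red}$ is a bicolored multiset of edges; after cancelling any $t_e$ common to both monomials we may assume $\supp(\E_{blue}) \cap \supp(\E_{red}) = \emptyset$.

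Next I would compute the image of each term under $\phi_H$. Expanding $x^e = \prod_{v \in e} x_v$, we get
\[
\phi_H\Bigl(\prod_{e \in \E_{blue}} t_e\Bigr) \;=\; \prod_{e \in \E_{blue}} x^e \;=\; \prod_{v \in V} x_v^{\deg_{blue}(v;\E)},
\]
and similarly for the red product with $\deg_{red}(v;\E)$ in place of $\deg_{blue}(v;\E)$. Thus $f_\E \in I_H$ if and only if these two monomials in $k[x_1,\dots,x_n]$ agree, which by comparing exponents is precisely the balancing condition $\deg_{blue}(v;\E) = \deg_{red}(v;\E)$ for every vertex $v$. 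This establishes the first sentence of the proposition.

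For the second sentence I would appeal to the standard theory of toric ideals. Since $I_H = \ker \phi_H$ is a toric ideal (the kernel of a monomial homomorphism between polynomial rings), it is generated by binomials; moreover it is generated by its Graver basis, whose elements are by definition the primitive binomials of $I_H$, i.e.\ those $t^\alpha - t^\beta \in I_H$ (with disjoint supports) for which there is no other $t^{\alpha'} - t^{\beta'} \in I_H$ with $\alpha' \leq \alpha$ and $\beta' \leq \beta$ componentwise and at least one inequality strict. Translating through the bijection of Step~1, this is exactly the paper's definition of a primitive balanced edge set, so the Graver binomials are precisely the $f_\E$ for primitive $\E$, and the proposition follows.

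I do not anticipate a substantive obstacle: the proof is essentially a dictionary between the combinatorial language of balanced edge sets and the algebraic language of binomials in a toric ideal. The only point requiring care is to verify that the paper's notion of a primitive $\E$ (no proper balanced sub-bicoloring $\E'$ with $\E'_{blue} \subsetneq \E_{blue}$ and $\E'_{red} \subsetneq \E_{red}$) matches the classical definition of a primitive binomial, which is immediate from the multiset-to-monomial correspondence.
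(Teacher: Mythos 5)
Your proposal is correct and takes essentially the same route as the paper's own (much terser) proof: the paper simply asserts that the balancing condition \eqref{degreeCondition} is what makes $f_{\E}$ lie in $\ker\phi_H$, and dismisses the generation claim as ``immediate'' from the standard fact that a toric ideal is generated by its Graver basis. Your version merely spells out the exponent comparison $\prod_{e}x^{e}=\prod_{v}x_v^{\deg(v)}$ and the dictionary between primitive balanced edge sets and Graver basis elements, which the paper leaves implicit.
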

\begin{proof}
	Suppose $\mathcal E$ is a balanced multiset of edges over $H$. Define  a binomial $f_\mathcal E\in k[t_{e} : e\in E(H)]$ as follows: 
	\[
		 f_\mathcal E=\prod_{e\in \mathcal E_{blue}} t_e - \prod_{e'\in\mathcal E_{red}} t_{e'}.
	\]
	The balancing condition~\eqref{degreeCondition} ensures that $f_\mathcal E$ is in the kernel of the map $\phi_H$.
	
	The second claim is immediate. 
\end{proof}

 Motivated by the application of reducible simplicial complexes to understand the Markov bases of hierarchical log-linear models \cite{DobraSul}, we now introduce notions of reducibility and separators for balanced edge sets. For simplicity,  we will often abuse notation and use $H$ to denote the edge set of $H$.

\begin{defn} 
\label{defn:walkReducible}

A balanced edge set $\E$ is said to be \emph{reducible with separator $S$}, $\supp (S)  \subseteq \supp(\E)$, and \emph{decomposition} $(\varGamma_1, S, \varGamma_2)$, if there exist balanced edge sets $\varGamma_1 \neq \E $ and $\varGamma_2 \neq E$  with $S\neq\emptyset$ 
such that  $S=\varGamma_{1_{red}} \cap \varGamma_{2_{blue}} $, $\E=\varGamma_1 \sqcup \varGamma_2$, and the  following coloring conditions hold:  $\Gonered, \Gtwored \subseteq \E_{red}$ and $\Goneblue, \Gtwoblue  \subseteq \E _{blue}$. 

We say that $S$ is \emph{proper}  with respect to $(\varGamma_1, S, \varGamma_2)$ if $S$ is a proper submultiset of both $\Gonered$ and $\Gtwoblue$.  

If $S$ is not proper, then $S$ is said to be \emph{blue} with respect to $(\varGamma_1, S, \varGamma_2)$ if $\Gonered=S$, and \emph{red} with respect to $(\varGamma_1, S, \varGamma_2)$ if $\Gtwoblue=S$. 
\end{defn}

Figure~\ref{fig:figure1} shows an example of a reducible balanced edge set $\E$. The separator is proper and consists of the single green edge $e_s$; it appears twice in the balanced edge set $\E$, once as a blue edge and once as a red edge. Figure~\ref{fig:figure2} shows a reducible balanced edge set where the separator, consisting of  the two green edges $e_1$ and $e_2$, is not proper. As before,  the separator edges appear twice in the balanced edge set.  

\begin{figure}[h]
\hfill
  \begin{minipage}[t]{.40\textwidth}
    \begin{center}  
\includegraphics[width=1.5in]{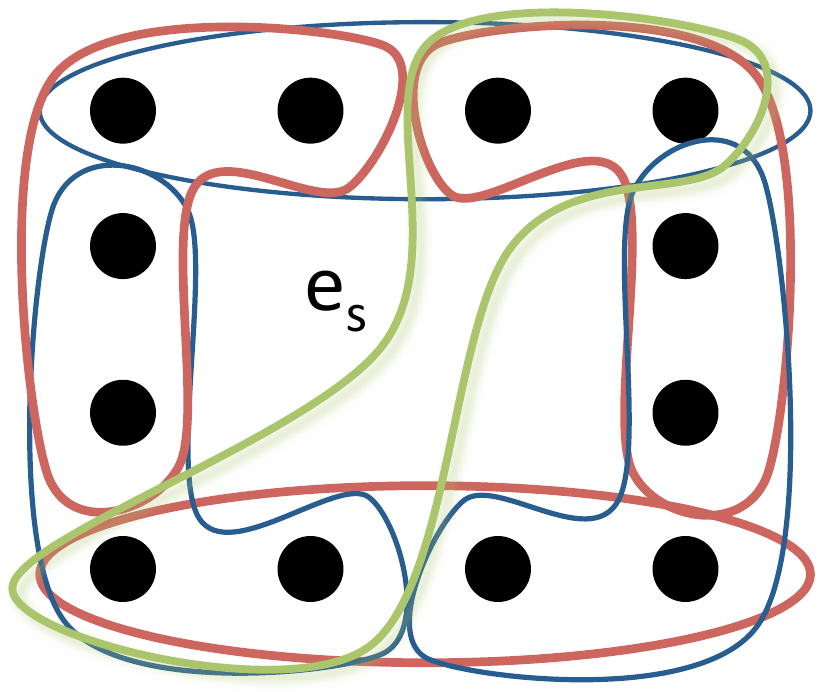}
\caption{Reducible balanced edge set. The green edge $e_s$ is the separator.}
\label{fig:figure1}
    \end{center}
  \end{minipage}
  \hfill
  \begin{minipage}[t]{.50\textwidth}
    \begin{center}  
\includegraphics[width=1.5in]{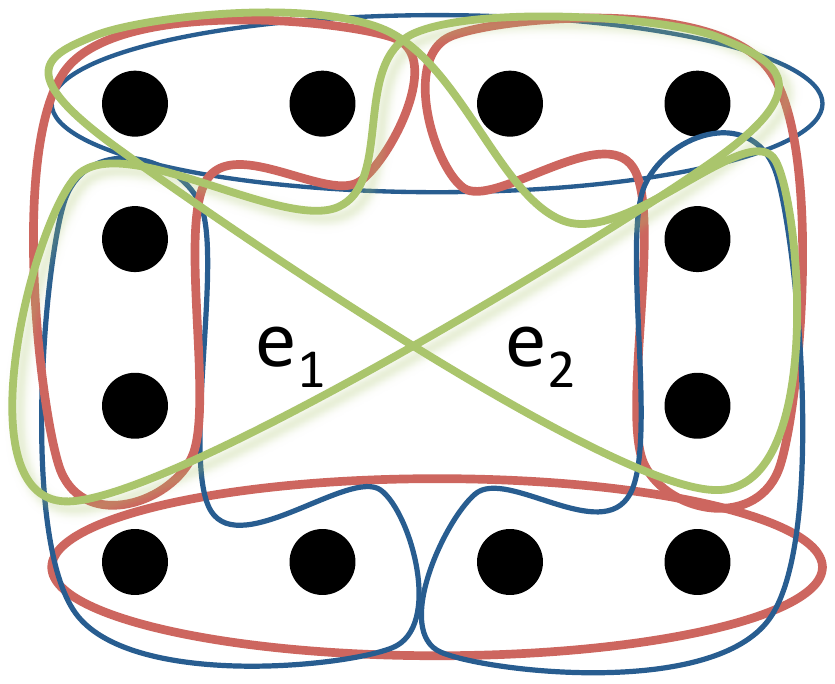}
\caption{Reducible balanced edge set with an improper separator. The separator consists of green edges $e_1$ and $e_2$.}
\label{fig:figure2}
    \end{center}
  \end{minipage}
  \hfill
\end{figure}

If $H$ is a hypergraph and $\E$ is a balanced edge set with $\supp (\E) \subseteq H$, given a multiset $S$ with $\supp(S) \subseteq H$, we can construct a new balanced edge set in the following manner:
\[
 	\E+S:=(\E_{blue} \sqcup S) \sqcup_m (\E_{red} \sqcup S).
\]

 \begin{defn} 
 \label{defn:splittingSet}
 Let $H$ be a hypergraph. Let $\E$ be a balanced edge set with size $2n$ such that $\supp(\E) \subseteq H$.  A non-empty multiset $S$ with $\supp (S) \subseteq H$ is a \emph{splitting set} of $\E$ with decomposition $(\varGamma_1, S, \varGamma_2)$ if $\E + S$ is reducible with separator $S$ and decomposition $(\varGamma_1, S, \varGamma_2)$.  
 
  $S$ is said to be a \emph{blue} (\emph{red}, resp.)  \emph{splitting set with respect} to $(\varGamma_1, S, \varGamma_2)$, if $S$ is a blue (red, resp.) separator of $\E + S$ with respect to $(\varGamma_1, S, \varGamma_2)$.  
  
  $S$ is a \emph{proper splitting set} of $\E$ if there exists a decomposition $(\varGamma_1, S, \varGamma_2)$ of $\E+ S$ such that $S$ is a proper separator with respect to $(\varGamma_1, S, \varGamma_2)$.
\end{defn}

\begin{eg}[{\bf Group-based Markov model}] 
Let $V_1=\{x_1, x_2, x_3, x_4\}$, $V_2=\{y_1, y_2, y_3, y_4\}$, and $V_3=\{z_1, z_2, z_3, z_4\}$.   Let $V$ be the disjoint union of $V_1$, $V_2$, and $V_3$. Let $H$ be the 3-uniform hypergraph with vertex set $V$ and edge set:

\[
\begin{array}{cccc}e_{111}=\{x_1,y_1, z_1\} & e_{122}=\{x_1, y_2,z_2\} & e_{133}=\{x_1,y_3,z_3\} & e_{144}=\{x_1,y_4,z_4\} \\e_{221}=\{x_2, y_2, z_1\} & e_{212}=\{x_2, y_1, z_2\} & e_{243}=\{x_2,y_4,z_3\} & e_{234}=\{x_2,y_3,z_4\} \\e_{331}=\{x_3, y_3, z_1\} & e_{342}=\{x_3, y_4, z_2\}  & e_{313}=\{x_3,y_1,z_3\} & e_{324}=\{x_3, y_2,z_4\} \\e_{441}=\{x_4,y_4,z_1\} & e_{432}=\{x_4, y_3, z_2\} & e_{423}=\{x_4,y_2,z_3\} & e_{414} =\{x_4,y_1,z_4\}\end{array}
\]

The hypergraph $H$ has applications in algebraic phylogenetics: it represents the parametrization of a particular group-based model from  \cite[Example 25]{phylo}.

Consider the monomial walk
\[
	\W=\{e_{324},e_{111},e_{243},e_{432}\} \sqcup_m \{e_{122},e_{313},e_{234},e_{441} \}.
\]
Let $S=\{e_{133}, e_{212}\}$. Then $S$ is a splitting set of $\W$ with decomposition $(\varGamma_1, S, \varGamma_2)$ where
\begin{align*}
\Gamma_1&=\{e_{111},e_{243},e_{432}\} \sqcup_m \{e_{133},e_{212}, e_{441} \}\\
\Gamma_2 &=\{e_{133},e_{212},e_{324}\} \sqcup_m \{e_{122},e_{313},e_{234} \}.
\end{align*}

The decomposition $(\Gamma_1,S,\Gamma_2)$ encodes binomials in $I_H$ that generate $f_{\W}$:
\[
	f_{\W}=t_{e_{324}}(t_{e_{111}}t_{e_{243}}t_{e_{432}} - t_{e_{133}}t_{e_{212}}t_{ e_{441}})+
	t_{e_{441}}(t_{e_{133}}t_{e_{212}}t_{e_{324}} - t_{e_{122}}t_{e_{313}}t_{e_{234}}).   
\]
\end{eg}

The previous example illustrates the algebraic interpretation of a splitting set.  Notice there is a correspondence between monomials in $k[t_{e_i}]$ and multisets of edges of $H$. We will write $E(t_{e_{i_1}}^{a_1}t_{e_{i_2}}^{a_2}\cdots t_{e_{i_l}}^{a_l})$ for the multiset $( \{e_{i_1}, \ldots, e_{i_l}\}, f)$ where
\begin{align*}
f: \{e_{i_1}, \ldots, e_{i_l}\} &\to \N \\
e_{i_j} \mapsto a_{j}. 
\end{align*}
Thus the support of $E(t_{e_{i_1}}^{a_1}t_{e_{i_2}}^{a_2}\cdots t_{e_{i_l}}^{a_l})$ corresponds to the  support of the monomial 
$t_{e_{i_1}}^{a_1}t_{e_{i_2}}^{a_2}\cdots t_{e_{i_l}}^{a_l}$.

If $f_{\E}=u-v \in I_{H}$ is the binomial arising from the balanced edge set $\E$, then a monomial $s$ corresponds to a splitting set $S$ if and only if there exist two binomials $u_1 -v_1, u_2-v_2 \in I_{H}$ such that $us=u_1u_2$, $vs=v_1v_2$ and $s=\gcd(v_2,u_1)$.  In this case, the decomposition of $\E + S$ is $(\Gamma_1, S, \Gamma_2)$ where $\Gamma_1=E(u_1) \sqcup_m E(v_1)$ and $\Gamma_2=E(u_2) \sqcup_m E(v_2)$.

For a balanced edge set, $\E$, the existence of a spitting set determines whether the binomial $f_{E} \in I_H$ can be written as the linear combination of two binomials $f_{\Gamma_1}$,$f_{\Gamma_2}$ $\in I_H$. While, in general, the existence of a splitting set does not imply $\deg(f_{\Gamma_1}), \deg(f_{\Gamma_2}) < \deg (f_{\E})$, if $H$ is uniform and the splitting set is proper, then the following lemma holds.

\begin{lemma} \label{lm:properSplittingGiveDecomposition}
Let $H$ be a uniform hypergraph and let $\W$ be a monomial walk with $\supp(\W) \subseteq H$ and $|\W|=2n$. If $S$ is a proper splitting set of $\W$, then there exists a decomposition $(\varGamma_1, S, \varGamma_2)$ of $\W+S$ such that $|\varGamma_1| <  |\W|$ and $|\varGamma_2|<|\W|$. \end{lemma}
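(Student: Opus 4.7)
The plan is to exploit $d$-uniformity to convert the balancing condition into a simple counting equality, then track how multiplicities on the two sides of the decomposition must redistribute when we cancel off the separator $S$.

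First I would record the key numerical consequence of $d$-uniformity: if $\mathcal M = \mathcal M_{blue} \sqcup_m \mathcal M_{red}$ is a balanced multiset on $H$, then summing $\deg_{blue}(v;\mathcal M) = \deg_{red}(v;\mathcal M)$ over all $v \in V(\mathcal M)$ and counting incidences $(e,v)$ with $v \in e$ gives $d \cdot |\mathcal M_{blue}| = d \cdot |\mathcal M_{red}|$, hence $|\mathcal M_{blue}| = |\mathcal M_{red}| = |\mathcal M|/2$. Applied to $\W$, this gives $|\W_{blue}| = |\W_{red}| = n$, and applied to the balanced multisets $\varGamma_1, \varGamma_2$ it yields $|\varGamma_i| = 2|\varGamma_{i_{blue}}| = 2|\varGamma_{i_{red}}|$ for $i=1,2$.

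Next I would fix the decomposition $(\varGamma_1, S, \varGamma_2)$ of $\W+S$ with respect to which $S$ is a proper separator (such a decomposition exists by Definition~\ref{defn:splittingSet}). From $\W+S = \varGamma_1 \sqcup \varGamma_2$ (with matching bicolorings), the multiset identities
\[
\W_{blue} \sqcup S = \Goneblue \sqcup \Gtwoblue, \qquad \W_{red} \sqcup S = \Gonered \sqcup \Gtwored
\]
hold. Set $T' := \Gonered - S$ and $T := \Gtwoblue - S$, which are well-defined multisets since $S \subseteq \Gonered$ and $S \subseteq \Gtwoblue$. Properness of $S$ gives $T \neq \emptyset$ and $T' \neq \emptyset$. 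Substituting into the identities above, one checks $T' \sqcup \Gtwored = \W_{red}$ and $T \sqcup \Goneblue = \W_{blue}$; in particular $T \subseteq \W_{blue}$ and $T' \subseteq \W_{red}$, and $|\Goneblue| = n - |T|$, $|\Gtwored| = n - |T'|$.

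Finally I would combine these with the uniformity equality. Since $|\Gonered| = |S| + |T'|$ and $|\Gonered| = |\Goneblue| = n - |T|$, one obtains $|T| + |T'| = n - |S|$; with $|T|, |T'| \geq 1$ this forces $|T| \leq n - |S| - 1$, and symmetrically $|T'| \leq n - |S| - 1$. Therefore
\[
|\varGamma_1| = 2|\Goneblue| = 2(n - |T|) \leq 2(n-1) < 2n = |\W|,
\]
and the same bound for $|\varGamma_2| = 2(n - |T'|)$ follows identically. The argument is largely bookkeeping; the only subtle point — and the one worth writing out carefully — is that the multiset cancellations giving $T \subseteq \W_{blue}$ and $T' \subseteq \W_{red}$ do use the exact equality $\W + S = \varGamma_1 \sqcup \varGamma_2$, not merely inclusions.
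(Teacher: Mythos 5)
Your argument is correct and is essentially the paper's own: both proofs combine uniformity (which forces $|\varGamma_{i_{blue}}|=|\varGamma_{i_{red}}|$ and $|\W_{blue}|=|\W_{red}|=n$) with the size identity $|\W+S|=|\varGamma_1|+|\varGamma_2|$ and use properness of $S$ to make the resulting inequality strict. The paper does this purely with cardinalities ($2n+2|S|=2n_1+2n_2$ together with $n_1,n_2>|S|$), whereas you track the complementary multisets $T=\Gtwoblue-S$ and $T'=\Gonered-S$; since $|T|=n_2-|S|$ and $|T'|=n_1-|S|$, this is the same computation in slightly more explicit form.
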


\begin{proof}  
Let $S$  be a proper splitting set of $\W$.  By definition, 
 there exists a decomposition $(\varGamma_1, S, \varGamma_2)$ of $\W+S$, such that $S$ is a proper submultiset of $\Gonered$ and $\Gtwoblue$.

 Let $|\varGamma_1|=2n_1$ and $|\varGamma_2|=2n_2$.  Since $\W+S=\varGamma_1 \sqcup \varGamma_2$, it follows that $|\W +S|=|\varGamma_1|+|\varGamma_2|$.  Then, $2n+2|S|=2n_1+2n_2$, which implies $2n-2n_1=2n_2 -2|S|$.
But $S$ being a proper submultiset of $\Gtwoblue$ gives that $n_2 > |S|$, which, in turn, implies that $n>n_1$.  By a similar argument, $n>n_2$.  Thus $|\varGamma_1| <  |\W|$ and $|\varGamma_2|<|\W|$.
\end{proof}

\section{Indispensable Binomials}
\label{sec:indispensable}

A binomial $f$ in a toric ideal $I$ is \emph{indispensable} if  $f$ or $-f$ belongs to every binomial generating set of $I$. Indispensable binomials of toric ideals were introduced by Takemura et al, and are studied in \cite{AA}, \cite{ATY}, \cite{CKT}, \cite{OH2005}, \cite{RTT}.

\begin{prop}\label{prop:indispensable} Let $H$ be a hypergraph. Let $\E$ be a balanced edge set with $\supp(\E) \subseteq H$. Let $f_{\E}$ be the binomial arising from $\E$. If there does not exist a splitting set of $\E$, then $f_{\E}$ is an indispensable binomial of $I_H$. 
\end{prop}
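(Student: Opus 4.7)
My plan is to prove the contrapositive: assuming $f_\E = u - v$ is not indispensable in $I_H$, I will produce a splitting set of $\E$. The key tool is the algebraic characterization of splitting sets recalled in the excerpt preceding the proposition: a monomial $s$ corresponds to a splitting set $S$ of $\E$ precisely when there exist nonzero binomials $u_1 - v_1,\ u_2 - v_2 \in I_H$ (distinct from $\pm f_\E$) satisfying $us = u_1 u_2$, $vs = v_1 v_2$, and $s = \gcd(u_2, v_1)$ (after correcting the evident typo in the excerpt).

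Suppose $f_\E$ is not indispensable, so $I_H$ has a binomial generating set $\mathcal G$ with $\pm f_\E \notin \mathcal G$. Expanding $f_\E$ as a polynomial combination of elements of $\mathcal G$ and observing that the monomial $u$ must appear on the right-hand side with coefficient $+1$, I extract a generator $g_1 = u_1 - v_1 \in \mathcal G$ together with a monomial $m_1$ such that $m_1 u_1 = u$; thus $u_1 \mid u$ and $g_1 \neq \pm f_\E$.

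I then apply the characterization with $s := v_1$, $u_2 := m_1 v_1$, and $v_2 := v$. By direct computation $us = u_1 u_2$ and $vs = v_1 v_2$, while the condition $\gcd(u_2, v_1) = \gcd(m_1 v_1, v_1) = v_1 = s$ holds automatically. Moreover $u_2 - v_2 = m_1 v_1 - v = f_\E - m_1 g_1 \in I_H$. The corresponding balanced edge sets $\varGamma_1 = E(u_1) \sqcup_m E(v_1)$ and $\varGamma_2 = E(m_1 v_1) \sqcup_m E(v)$ satisfy $\varGamma_1 \sqcup \varGamma_2 = \E + S$, and $\Gonered \cap \Gtwoblue = E(v_1) \cap (E(m_1) \sqcup E(v_1)) = E(v_1) = S$ by multiset arithmetic. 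Routine checks show $\varGamma_i \neq \E + S$ and $f_{\varGamma_i} \neq \pm f_\E$. Provided $u_2 \neq v_2$, this delivers a valid splitting set of $\E$.

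The main obstacle is the degenerate case $u_2 = v_2$, which occurs precisely when $m_1 v_1 = v$, i.e., when $f_\E = m_1 g_1$ is a proper monomial multiple of $g_1$ (so $m_1 \neq 1$). In this case the construction above yields the trivial binomial $u_2 - v_2 = 0$, and a splitting set must be produced differently. Since $m_1$ then divides $\gcd(u, v)$, the balanced edge set $\E$ is non-primitive, decomposing into the proper balanced submultiset $E(u_1) \sqcup_m E(v_1)$ together with a block $E(m_1) \sqcup_m E(m_1)$ of edges appearing on both sides. I would exploit this non-primitivity by selecting $S$ to contain an edge in $E(m_1) \subseteq \E_{blue} \cap \E_{red}$ and constructing a decomposition of $\E + S$ whose two components correspond to non-trivial binomials distinct from $\pm f_\E$. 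Handling this monomial-multiple case cleanly---perhaps by showing one can always choose the witness $g_1$ in $\mathcal G$ to avoid it---is the principal technical difficulty of the proof.
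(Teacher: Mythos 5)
Your construction is exactly the paper's: the authors pick a generator $f_i=u_1-v_1$ of $\mathcal G$ with $u_1\mid f_\E^+$ and set $S=\E_{i_{red}}=E(v_1)$, $\varGamma_1=\E_i$, and $\varGamma_2=((\E_{blue}-\E_{i_{blue}})\sqcup \E_{i_{red}})\sqcup_m \E_{red}$, which is precisely your $E(m_1v_1)\sqcup_m E(v)$. The one point where you diverge is the ``degenerate case'' $m_1v_1=v$ that you flag as the principal difficulty; this is not actually an obstruction, and the paper rightly ignores it. Definitions~\ref{defn:splittingSet} and~\ref{defn:walkReducible} are purely combinatorial: they only require $\varGamma_1$ and $\varGamma_2$ to be balanced edge sets different from $\E+S$ satisfying the intersection and coloring conditions, not that $f_{\varGamma_2}$ be a nonzero binomial. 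When $m_1v_1=v$, the multiset $\varGamma_2=E(m_1v_1)\sqcup_m E(m_1v_1)$ is still a balanced edge set, and $\varGamma_2=\E+S$ would force $v_1=1$, which is impossible since $\phi_H$ sends every variable to a nonconstant monomial; the coloring conditions hold because $m_1\mid u$. So the same $S$ works verbatim, and the extra machinery you sketch for the monomial-multiple case is unnecessary --- the apparent difficulty arises only because you route the verification through the informal algebraic characterization of splitting sets (which speaks of two binomials in $I_H$) rather than the combinatorial definition the proposition actually invokes.
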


\begin{proof}  Suppose $\E$ is not indispensable. Then there is a binomial generating set of $I_H$, $\mathcal{G}=\{f_1, \ldots, f_n\}$, such that $f_{\E} \notin \mathcal{G}$ and $-f_{\E} \notin \mathcal{G}$.  

Since $f_{\E} = f_\E^+-f_\E^- \in I_H$, there is a $f_i=f_i^+-f_i^- \in \mathcal{G}$ such that $f_i^+$ or $f_i^-$ divides $f_{\E}^{+}$. Without loss of generality, assume $f_i^{+} | f_{\E}^{+}$. Since $f_i$ is a binomial in $I_H$, $f_i$ arises from a monomial walk $\E_i$ on $H$.

Let $S=\E_{i_{red}}$. Let $\Gamma_1=\E_i$ and $\Gamma_2=\Gtwoblue \sqcup_m \Gtwored$ where 
\begin{align*}
\Gamma_{2_{blue}}&= ((\E_{blue} - \E_{i_{blue}}) \sqcup \E_{i_{red}})\\
\Gtwored&=\E_{red}.
\end{align*}
 Since  $f_i^{+} | f_\E^{+}$, the  multiset $\E_{i_{blue}} \subseteq \E_{blue}$, and thus $\Gamma_1 \sqcup \Gamma_2=\E+S$.  By construction, $\Gonered \cap \Gtwoblue=S$. Therefore $S$ is a splitting set of $\E$.
\end{proof}

If every Graver basis element of a binomial ideal $I_H$ is indispensable, then the Graver basis of $I_H$ is the unique minimal generating set of $I_H$. Propositions~\ref{prop:2regular} and~\ref{prop:partite2reg} describe two classes of hypergraphs where this is the case.  In particular, for these hypergraphs, the universal Gr\"{o}bner basis of $I_H$ is a minimal generating set.

\begin{prop}\label{prop:2regular} If $H$ is a 2-regular uniform hypergraph, then the Graver basis of $I_H$ is the unique minimal generating set of $I_H$.
\end{prop}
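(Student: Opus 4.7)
The plan is to apply Proposition~\ref{prop:indispensable}: since primitives generate $I_H$ (Proposition~\ref{prop:balanced-non-uniform}) and indispensable binomials must appear in every minimal generating set, it suffices to show that on a $2$-regular uniform hypergraph $H$, no primitive balanced edge set $\E$ admits a splitting set. This will force the Graver basis to be both a generating set of $I_H$ and its unique minimal one.

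The first step is to pin down the rigid structure of a primitive $\E$. Let $m(e)$ denote the blue minus red multiplicity of $e$ in $\E$. Primitivity immediately forces these two multiplicities to have disjoint support on each edge, else peeling a common copy produces a strictly smaller balanced set. The $2$-regularity of $H$ means balance at each vertex $v$ involves only the two edges $e_v^1, e_v^2$ of $H$ meeting at $v$ and reads $m(e_v^1) + m(e_v^2) = 0$. Form the adjacency graph $G(H)$ on $E(H)$ by joining edges that share a vertex; then $m$ negates across every edge of $G(H)$, so $|m|$ is constant on each component and must vanish on every non-bipartite component (odd cycles force $m\equiv 0$). Since the restriction of $\E$ to any component of $G(H)$ is itself balanced, primitivity forces $\supp(\E)$ to be a single bipartite component $C = A \sqcup B$ of $G(H)$, and rescaling then forces all multiplicities to equal $1$, so $\E_{blue} = A$ and $\E_{red} = B$.

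Now assume for contradiction that $\E$ has a splitting set $S$ with decomposition $(\varGamma_1, S, \varGamma_2)$. The conditions $S = \varGamma_{1_{red}}\cap\varGamma_{2_{blue}}$ and $\E + S = \varGamma_1\sqcup\varGamma_2$ together with the coloring rules give
\[
\varGamma_{1_{blue}} = A - B',\quad \varGamma_{1_{red}} = S\sqcup R,\quad \varGamma_{2_{blue}} = S\sqcup B',\quad \varGamma_{2_{red}} = B - R,
\]
for some $B'\subseteq A$ and $R\subseteq B$ with $R\cap B' = \emptyset$. Running the same $2$-regular analysis on the balanced set $\varGamma_1$, its own difference function $m_1$ satisfies the same negation, so $|m_1|$ is constant on each component of $G(H)$. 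Writing $\sigma(e)$ for the multiplicity of $e$ in $S$, a quick sign check rules out $\sigma$ having support outside $C$: on any such component $m_1 = -\sigma \le 0$ everywhere, which combined with the alternation forces $m_1\equiv 0$ and hence $\sigma\equiv 0$ there. On $C$, $\varGamma_{1_{blue}}$ contains no edge of $B$, so $m_1\le 0$ on $B$, pinning $m_1 = -c_1$ on $B$ and $m_1 = +c_1$ on $A$ for some integer $c_1\ge 0$. Using $m_1(a) = 1 - \sigma(a)$ for $a\in A - B'$ and $m_1(a) = -\sigma(a)$ for $a\in B'$ together with $\sigma \ge 0$ forces $c_1\in\{0,1\}$. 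Each case collapses the decomposition: $c_1 = 0$ yields $R = \emptyset$ and $S = A - B'$, so $\varGamma_2 = A\sqcup_b B = \E$; $c_1 = 1$ yields $B' = \emptyset$ and $S = B - R$, so $\varGamma_1 = A\sqcup_b B = \E$. Either outcome contradicts the requirement $\varGamma_i\neq\E$ in Definition~\ref{defn:walkReducible}, so no splitting set exists and Proposition~\ref{prop:indispensable} makes $f_\E$ indispensable.

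The main obstacle is the simultaneous tracking of the multisets $B'$, $R$, and $S$ under the $2$-regular sign constraints on $\varGamma_1$; everything hinges on the rigidity of primitives on a $2$-regular hypergraph (supported on a single bipartite component with unit multiplicities), which confines every putative splitting set to one of two symmetric shapes, each incompatible with the nontriviality requirement $\varGamma_i\neq\E$.
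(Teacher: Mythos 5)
Your argument reaches the right conclusion but by a genuinely different route than the paper. The paper outsources the structure theory to Proposition 3.2 of \cite{PS}: primitivity plus $2$-regularity forces $\supp(\W_{blue})$ and $\supp(\W_{red})$ to be edge-disjoint perfect matchings on $V(\W)$ with no repeated edges, and then any splitting set would yield a primitive sub-walk of $\varGamma_1$ that is again a pair of perfect matchings, hence a proper balanced subset of $\W$, contradicting primitivity. You instead re-derive this rigidity from scratch via the sign-alternation of the multiplicity-difference function $m$ across the edge-adjacency graph $G(H)$, and then classify \emph{all} candidate decompositions $(\varGamma_1,S,\varGamma_2)$ by explicit multiset bookkeeping. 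Your version is self-contained, and it explicitly controls where the separator edges may live (the component argument forcing $\sigma\equiv 0$ outside $C$), a point the paper's shorter proof does not address directly since $S$ need not be supported on the induced subhypergraph on $V(\W)$.

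One step needs repair at the very end. The contradiction you invoke is ``$\varGamma_i\neq\E$,'' but Definition~\ref{defn:walkReducible} is being applied to the set $\E+S$, so its nontriviality clause reads $\varGamma_i\neq\E+S$; since $S\neq\emptyset$, your conclusions $\varGamma_2=\E$ (when $c_1=0$) and $\varGamma_1=\E$ (when $c_1=1$) do not violate it. The actual contradiction sits in the \emph{other} factor of each case, and your computation already produces it: when $c_1=0$ you get $\Goneblue=A-B'=S=\Gonered$, and when $c_1=1$ you get $\Gtwoblue=S=B-R=\Gtwored$. That is, the complementary $\varGamma_i$ is degenerate, with equal blue and red parts, so the associated ``binomial'' is zero. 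Such a factor cannot occur in a splitting-set decomposition under the algebraic characterization in Section~\ref{sec:splits-and-reducibility}, which requires $u_1-v_1$ and $u_2-v_2$ to be binomials of $I_H$ (the paper's own proof relies on the same implicit nondegeneracy when it extracts a primitive walk from $\varGamma_1$). Re-attribute the contradiction to this degeneracy and the proof is complete.
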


For the proof of Proposition~\ref{prop:2regular}, we make use of Proposition 3.2 in \cite{PS} which concerns balanced edge sets that are pairs of perfect matchings.

\begin{defn} A \emph{matching} on a hypergraph $H=(V,E)$ is a subset $M \subseteq E$ such that the elements of $M$ are pairwise disjoint.  A matching is called \emph{perfect} if $V(M)=V$.
\end{defn}

\begin{proof}[Proof of Proposition~\ref{prop:2regular}] 
Let $\mathcal{G}$ be the Graver basis of $I_H$ and let $f \in \mathcal{G}$. Since every element of $\mathcal{G}$ is binomial,  $f$ arises from a primitive monomial walk $\W$ with $\supp(\W) \subseteq H$.

Let $M_b=\supp(\W_{red})$ and $M_r=\supp(\W_{blue})$.  By primitivity of $\W$, the intersection $M_r \cap M_b = \emptyset$. Since $\W$ satisfies condition~\eqref{degreeCondition} and $H$ is 2-regular, if $e_1, e_2 \in M_b$ and $e_1 \cap e_2 \neq \emptyset$, then $e_1 \in M_r$ or $e_2 \in M_r$, which would contradict the primitivity of $\W$.  So $M_b$ and $M_r$ are two edge-disjoint perfect matchings on $V(\W)$.  By Proposition 3.2 in \cite{PS}, $\W$ contains no multiple edges, i.e. $\W = M_b \sqcup_m M_r$.  Furthermore, since $H$ is 2-regular, the edge set of the subhypergraph induced by $V(\W)$ is $M_b \cup M_r$

Suppose $S$ is a splitting set of $\W$ with decomposition $(\varGamma_1, S, \varGamma_2)$.  By the correspondence between primitive monomial walks and primitive binomials, there exists a primitive monomial walk $\varGamma$ such that $\varGamma_{blue} \subseteq \Goneblue$ and $\varGamma_{red} \subseteq \Gonered$ (if $\varGamma_1$ is primitive, then $\varGamma=\varGamma_1$). By Proposition 3.2 in\cite{PS}, $\varGamma$ must be a pair of perfect matchings on $V(\varGamma)$. This means $\varGamma$ is a proper balanced edge set of $\W$, a contradiction.  Therefore, by Proposition \ref{prop:indispensable}, $f_{\W}$ is indispensable. Since every element in the Graver basis of $I_H$ is indispensable, there is no generating set of $I_H$ strictly contained in the Graver basis, and the claim follows. 
\end{proof}

\begin{definition} A $k$-uniform hypergraph $H=(V,E)$ is \emph{$k$-partite} if there exists a partition of $V$ into $k$ disjoint subsets, $V_1, \ldots, V_k$, such that each edge in $E$ contains exactly one vertex from each $V_i$.
\end{definition}

\begin{lemma}\label{lem:partite} Let $H=(V, E)$ be a $k$-uniform $k$-partite hypergraph with $E=E_b \sqcup E_r$ and $E_b \cap E_r = \emptyset$. If there exists a $V_i$, $1 \leq i \leq k$, such that  $\deg(v;E_r)=\deg(v;E_b)=1$  for all $v \in V_i$, then a monomial walk $\W$ with support $E$ is primitive only if $\W$ contains no multiple edges.
\end{lemma}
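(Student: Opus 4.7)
My plan is to argue by contradiction: assume $\W$ is primitive with $\supp(\W)=E$ but contains some edge of multiplicity at least two, and then produce a proper balanced sub-multiset of $\W$ to violate primitivity.

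First I would observe that primitivity alone forces $\supp(\W_{blue})\cap\supp(\W_{red})=\emptyset$: if some $e$ lay in both supports, removing one copy of $e$ from each color would give a proper balanced sub-multiset. Combined with $\supp(\W)=E=E_b\sqcup E_r$, this identifies, up to swapping color labels, $\supp(\W_{blue})=E_b$ and $\supp(\W_{red})=E_r$.

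Next I would exploit the degree-one condition on $V_i$ to build a bijection $\phi\colon E_b\to E_r$. For each $v\in V_i$, let $e_b(v)\in E_b$ and $e_r(v)\in E_r$ be the unique blue and red edges through $v$, and set $\phi(e_b(v)):=e_r(v)$. The balancing condition $\deg_{blue}(v;\W)=\deg_{red}(v;\W)$ at $v\in V_i$ is exactly the equality of the multiplicities of $e_b(v)$ and $e_r(v)$ in $\W$, so writing $m(e)$ for the multiplicity of $e$ in $\W$, we get the crucial pairing $m(e)=m(\phi(e))$ for every $e\in E_b$.

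The main construction is then to try to split $\W$ using the ``all-ones'' candidate $\W_0$ that takes each edge of $E_b$ once in blue and each edge of $E_r$ once in red. Balance of $\W_0$ at a vertex $v\in V_i$ is immediate from the degree-one hypothesis; at a vertex $v\notin V_i$ it reduces to the combinatorial equality $\deg(v;E_b)=\deg(v;E_r)$. If $\W_0$ is balanced, then $\W-\W_0$ is non-negative (because $m\geq 1$ pointwise on the support), non-zero (because some edge has multiplicity $\geq 2$), and properly contained in $\W$, yielding a proper balanced sub-multiset and contradicting primitivity.

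The main obstacle I anticipate is precisely verifying the equality $\deg(v;E_b)=\deg(v;E_r)$ at vertices outside $V_i$, since the hypothesis only supplies the matching structure on $V_i$. Writing the balance of $\W$ at $v\notin V_i$ via the bijection $\phi$ yields the linear relation $\sum_{u\in U_b(v)}m(u)=\sum_{u\in U_r(v)}m(u)$, where $U_b(v),U_r(v)\subseteq V_i$ are the $V_i$-vertices of the blue, respectively red, edges of $H$ through $v$. I expect that combining $m\geq 1$ with the primitivity of $\W$ --- either by a direct Hilbert-basis argument on the cone cut out by these relations, or by applying Proposition~3.2 of \cite{PS} to a perfect-matching-pair sub-multiset of $\W$ supported on a restricted vertex set --- rules out the asymmetric case $|U_b(v)|\neq|U_r(v)|$ for any primitive $\W$ with $m\geq 1$. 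This interaction between the $k$-partite structure of $H$ and the primitivity of $\W$ is where I expect the bulk of the technical work to reside.
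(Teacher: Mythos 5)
Your preliminary steps are sound: primitivity does force $\supp(\W_{blue})\cap\supp(\W_{red})=\emptyset$, each $v\in V_i$ then meets exactly one blue-support and one red-support edge (since $\deg(v;E)=2$), and balance at $v$ gives the multiplicity pairing $m(e_b(v))=m(e_r(v))$. (A small quibble: the identification of $\supp(\W_{blue})$ with $E_b$ ``up to swapping labels'' is not justified by the hypothesis --- the walk's bicoloring need not refine the given partition $E_b\sqcup E_r$ --- but this is harmless, since the degree-one property on $V_i$ transfers to the walk's own color supports, and those are what your argument actually uses.)

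The genuine gap is the step you yourself flag: the claim that $\deg(v;E_b)=\deg(v;E_r)$ for every $v\notin V_i$ whenever $\W$ is primitive. Once the multiplicity pairing is in hand, this claim is \emph{logically equivalent} to the lemma: if the lemma holds, primitivity gives $m\equiv 1$ and the balance condition at $v$ reads off the degree equality; conversely, if the degree equality holds, your all-ones walk $\W_0$ is a balanced proper sub-multiset and the lemma follows. So the proposal reduces the lemma to a restatement of itself and then defers the proof of that restatement to two unexecuted strategies. Neither closes the gap as written: the ``Hilbert-basis argument'' is not specified (and a single linear relation $\sum_{U_b(v)}m(u)=\sum_{U_r(v)}m(u)$ with $m\geq 1$ does not by itself rule out $|U_b(v)|\neq|U_r(v)|$ --- one must genuinely use minimality against \emph{all} balanced sub-multisets), and Proposition~3.2 of \cite{PS} cannot be invoked as a black box, because its hypothesis requires $\supp(\W_{blue})$ and $\supp(\W_{red})$ to be perfect matchings on all of $V(\W)$, whereas here they are matchings only in the single part $V_i$. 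What is needed is the argument inside the proof of necessity of that proposition, localized to the coordinate $V_i$ --- which is precisely, and only, what the paper's own one-line proof cites. Until the degree equality at vertices outside $V_i$ is actually established, the proof is incomplete.
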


\begin{proof}  Follows from the proof of necessity of Proposition 3.2 in \cite{PS}.
\end{proof}

\begin{prop}\label{prop:partite2reg} 
Let $H=(V, E)$ be a $k$-uniform $k$-partite hypergraph.  If there exists a $V_i$ such that $\deg(v; E)=2$ where for all $v \in V_i$, then  the Graver basis of $I_H$ is the unique minimal generating set of $I_H$.
\end{prop}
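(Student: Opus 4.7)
The plan is to mirror the proof of Proposition~\ref{prop:2regular} and show every Graver element is indispensable, then invoke Proposition~\ref{prop:indispensable}. Let $f \in \mathcal{G}$, so $f = f_\W$ for a primitive monomial walk $\W$ with $\supp(\W) \subseteq H$, and set $E_b := \supp(\W_{blue})$ and $E_r := \supp(\W_{red})$; by primitivity, $E_b \cap E_r = \emptyset$. The first observation is the following local degree claim at the distinguished part: for every $v \in V_i \cap V(\W)$, the balance condition together with $v \in V(\W)$ gives $\deg_{blue}(v;\W), \deg_{red}(v;\W) \geq 1$, while $\deg(v;H)=2$ and $E_b \cap E_r = \emptyset$ force $\deg(v;E_b) = \deg(v;E_r) = 1$. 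Applying Lemma~\ref{lem:partite} to the $k$-uniform $k$-partite sub-hypergraph $H' = (V(\W), E_b \cup E_r)$ then yields that $\W$ contains no multiple edges, i.e.\ $\W = E_b \sqcup_m E_r$.

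Next I would assume, for contradiction, that $\W$ admits a splitting set $S$ with decomposition $(\varGamma_1, S, \varGamma_2)$, and extract a primitive monomial walk $\varGamma$ with $\varGamma_{blue} \subseteq \Goneblue$ and $\varGamma_{red} \subseteq \Gonered$ (taking $\varGamma = \varGamma_1$ if $\varGamma_1$ is already primitive). Since $\supp(\varGamma) \subseteq H$ and $\varGamma$ is primitive, exactly the same argument applied to $\varGamma$ shows that $\varGamma$ has no multiple edges and that $\supp(\varGamma_{blue})$ and $\supp(\varGamma_{red})$ each meet every vertex of $V_i \cap V(\varGamma)$ in a single edge.

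Finally, mirroring the end of Proposition~\ref{prop:2regular}, this matching-like structure on $V_i$ should force $\varGamma$ to be a proper sub-balanced edge set of $\W$, contradicting primitivity. The idea is to use the defining equation $\W + S = \varGamma_1 \sqcup \varGamma_2$ together with $S = \Gonered \cap \Gtwoblue$ to show $V(\varGamma) \subseteq V(\W)$: any $V_i$-vertex of $\varGamma$ has $\varGamma$-degree $2$, so both edges of $H$ at that vertex lie in $\supp(\varGamma)$, and the coloring pattern propagates through the decomposition to identify them with the unique blue/red $\W$-edges at that vertex. The color-flipping scenario (edges of $E_b$ appearing red in $\varGamma$ or vice versa) is then ruled out by the disjointness of $\supp(\varGamma_{blue})$ and $\supp(\varGamma_{red})$ coming from primitivity, pinning $\varGamma \subsetneq \W$ as multisets. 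Hence no splitting set of $\W$ exists, Proposition~\ref{prop:indispensable} yields that $f_\W$ is indispensable, and since every Graver basis element is indispensable the Graver basis is the unique minimal generating set of $I_H$.

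The main obstacle is the last containment step: the hypergraph-level substitute for the easy matching-embedding in the $2$-regular case. Unlike Proposition~\ref{prop:2regular}, where $2$-regularity of $H$ pins down the entire edge set of the induced subhypergraph on $V(\W)$, here $2$-regularity only holds at $V_i$, so the tracking must be done one $V_i$-vertex at a time using the degree-$1$ conclusion of Lemma~\ref{lem:partite}. Once this local-to-global step is in place, the rest of the argument is formal.
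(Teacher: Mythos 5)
Your proposal follows essentially the same route as the paper: the paper's own proof is a two-sentence sketch that defers to the argument for Proposition~\ref{prop:2regular}, noting only that Lemma~\ref{lem:partite} forces $\supp(\W_{blue})$ and $\supp(\W_{red})$ to be edge-disjoint perfect matchings on $V(\W)|_{V_i}$ and that ``the rest follows immediately.'' You reconstruct exactly this, and in fact supply more detail than the paper does on the final containment step (deducing $\varGamma \subsetneq \W$ from only local $2$-regularity at $V_i$), which is the one point the paper leaves implicit.
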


\begin{proof} The proof is similar to the proof of Proposition~\ref{prop:2regular}. Note that while  $H$ may not be $2$-regular, one of its parts, $V_i$, is `locally' $2$-regular, and thus restricts the structure of monomial walks on $H$. In particular, Lemma~\ref{lem:partite} ensures that $M_r$ and $M_b$, are edge-disjoint perfect matchings on $V(\W)|_{V_i}$, and the rest of the proof follows immediately. 
\end{proof}

\begin{eg}[{\bf No 3-way interaction}] \label{ex:no3way}
The toric ideal of the hypergraph $H$ in Figure \ref{fig:nothreeway} corresponds to the hierarchical log-linear model for no $3$-way interaction on $2 \times 2 \times 2$ contingency tables.  This statistical model is a common example in algebraic statistics \cite[Example 1.2.7]{algStatBook}. 
Since there is exactly one primitive monomial walk $\W$ on $H$ that travels through $8$ edges, $I_H=(f_\W)$. 
\begin{figure}
\begin{center}  
\includegraphics[width=2.5in]{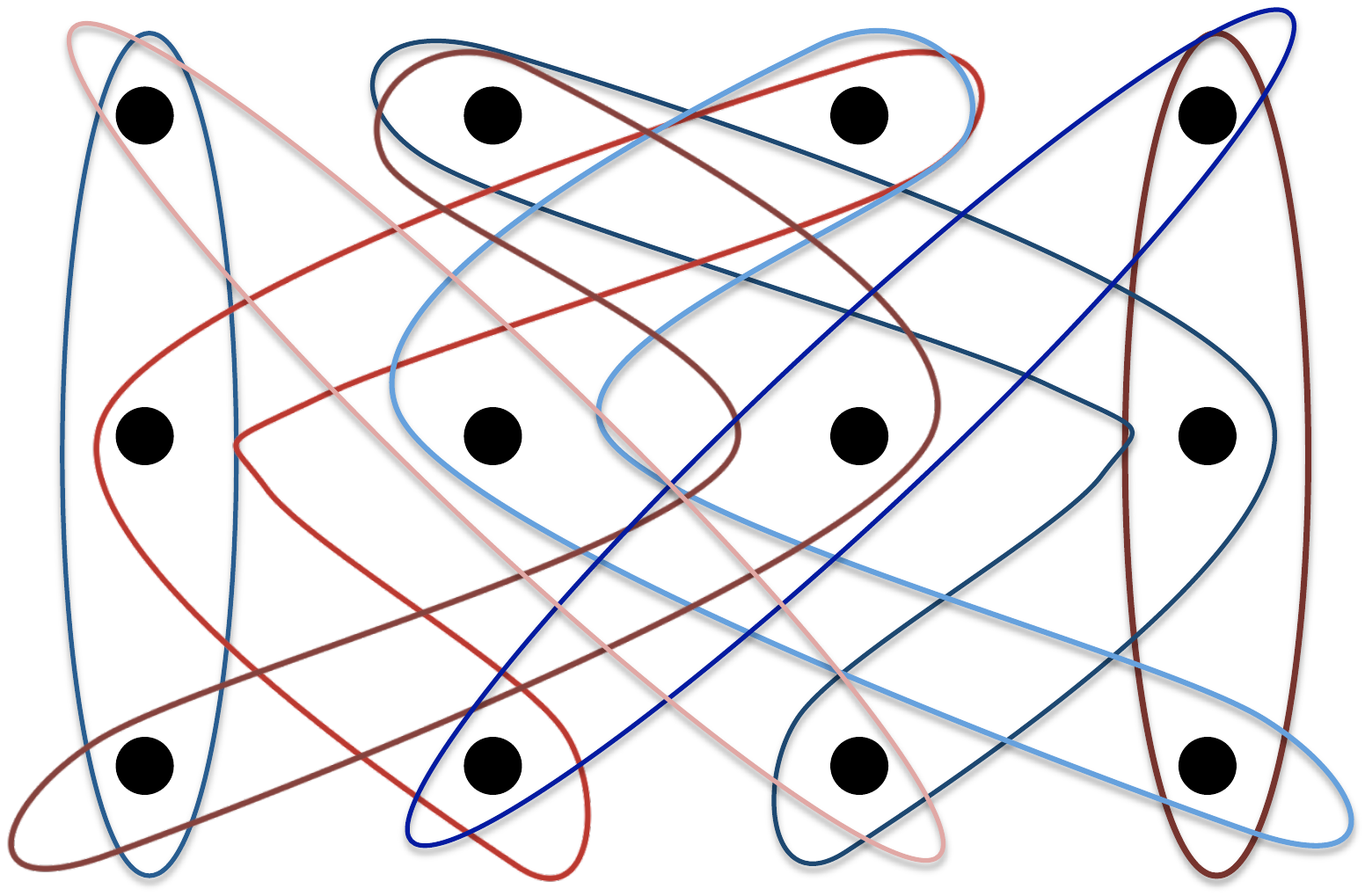}
\caption{}
\label{fig:nothreeway}
\end{center}
\end{figure}

For $2 \times 3 \times 3$ contingency tables with no 3-way interaction, the hypergraph  corresponding to this log-linear model has 18 edges.  The hypergraph in this case is $H=(V, E)$ where $V=\{x_{00}, x_{01}, x_{02}, x_{10}, x_{11},x_{12}, y_{00}, y_{01}, y_{02}, y_{10}, y_{11},y_{12}, z_{00}, z_{01}, z_{02}, z_{10}, z_{11},z_{12},z_{20},z_{21},z_{22}\}$ and the edge set is:
\[ \begin{array}{ccc}e_{000}=\{x_{00}, y_{00}, z_{00}\} & e_{001}=\{x_{00}, y_{01}, z_{01}\} & 
e_{002}=\{x_{00},y_{02},z_{02}\} \\ 
e_{010}=\{x_{01}, y_{00}, z_{10}\} & e_{011}=\{x_{01}, y_{01}, z_{11}\}  
& e_{012}=\{x_{01},y_{02},z_{12}\} \\ e_{020}=\{x_{02},y_{00},z_{20} \}&
e_{021}=\{x_{02},y_{01},z_{21} \}&
e_{022}=\{x_{02},y_{02},z_{22} \}\\
e_{100}=\{x_{10}, y_{10}, z_{00}\} & e_{101}=\{x_{10}, y_{11}, z_{01}\} & 
e_{102}=\{x_{10},y_{12},z_{02}\} \\ 
e_{110}=\{x_{11}, y_{10}, z_{10}\} & e_{111}=\{x_{11}, y_{11}, z_{11}\}  &
e_{112}=\{x_{11},y_{12},z_{12}\} \\
e_{120}=\{x_{12},y_{10},z_{20} \}&
e_{121}=\{x_{12},y_{11},z_{21} \}&
e_{122}=\{x_{12},y_{12},z_{22} \}
\end{array} \]

Let $\W$ be the primitive monomial walk
\[
\W=\{e_{000},e_{101}, e_{011}, e_{112}, e_{022}, e_{120} \} \sqcup_m \{e_{100},e_{001}, e_{111}, e_{012}, e_{122}, e_{220}. \}
\]

Every remaining edge $H$ that does not appear in $\W$ is not contained in $V(\W)$, thus it can be easily verified that there does not exist a splitting set of $\W$, so by Proposition \ref{prop:indispensable}, $f_{\W}$ is indispensable.  In fact, $H$ satisfies the condition of Proposition~\ref{prop:partite2reg} and thus every binomial in $I_H$ corresponding to a primitive monomial walk is indispensable.
\end{eg}

From the above discussion, we can see that if a uniform hypergraph $H$ contains an induced subhypergraph $H_s$ that is 2-regular and there exists a bicoloring such that with this bicoloring $H_s$ is also a balanced edge set, then the maximum degree of any minimal generating set of $I_H$ is at least $|E(H_s)|/2$. 

A similar statement holds for $k$-uniform, $k$-partite hypergraphs with vertex partition $V=\cup_{i=1}^k V_i$. Namely, if $H$  contains an induced subhypergraph $H_s$ that is 2-regular on $V_i$ (i.e., $H$ satisfies the conditions of Proposition~\ref{prop:partite2reg}) and there exists a bicoloring such that with this bicoloring $H_s$ is a balanced edge set (e.g., $H_s$ is a pair of disjoint perfect matchings), then the maximum degree of any minimal generating set of $I_H$ is at least $|E(H_s)|/2$.

Recall that degree bounds on minimal generators give a Markov complexity bound for the corresponding log-linear model in algebraic statistics. This allows us to recover a well-known result: 
\begin{corollary}[Consequence of Theorem 1.2 in \cite{DLO}; see also Theorem 1.2.17 in \cite{algStatBook}] 
\label{cor:SlimTables}
The Markov complexity for the no 3-way interaction model on $3 \times r \times c$ contingency tables grows arbitrarily large as $r$ and $c$ increase .
\end{corollary}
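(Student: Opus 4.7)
The plan is to invoke the degree bound stated in the paragraph immediately before the corollary: if $H$ is a $k$-uniform $k$-partite hypergraph containing an induced subhypergraph $H_s$ that is $2$-regular on one of its parts and whose edges admit a bicoloring into a pair of disjoint perfect matchings, then every minimal generating set of $I_H$ contains a binomial of degree at least $|E(H_s)|/2$. Since such degree bounds translate into Markov complexity bounds, it suffices to exhibit, for every $n\le\min(r,c)$, such a subhypergraph $H_s^{(n)}$ with $4n$ edges inside the hypergraph $H_{rc}$ of the no-$3$-way-interaction model on $3\times r\times c$ tables.

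As in Example~\ref{ex:no3way}, $H_{rc}$ is the $3$-uniform $3$-partite hypergraph with parts $V_1=\{\alpha_{ij}\}$, $V_2=\{\beta_{ik}\}$, $V_3=\{\gamma_{jk}\}$ (where $i\in[3]$, $j\in[r]$, $k\in[c]$) and edges $e_{ijk}=\{\alpha_{ij},\beta_{ik},\gamma_{jk}\}$. For each $n\le\min(r,c)$, identifying $\Z/n\Z$ with $\{1,\dots,n\}\subseteq[r]\cap[c]$ and reading $j+1$ modulo $n$, I would let $H_s^{(n)}$ be the subhypergraph with the bicolored edge set
\begin{align*}
\Eblue&:=\{e_{0,j,j},\ e_{1,j,j+1}\;:\;j\in\Z/n\Z\},\\
\Ered&:=\{e_{0,j,j+1},\ e_{1,j,j}\;:\;j\in\Z/n\Z\}.
\end{align*}

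Three properties then need to be checked. First, $H_s^{(n)}$ is induced in $H_{rc}$ on the vertex set $\{\alpha_{0,j},\alpha_{1,j},\beta_{0,k},\beta_{1,k},\gamma_{j,j},\gamma_{j,j+1}\}$: no edge from layer $i=2$ lies inside this set (since the $\alpha_{2,j}$'s are absent), and within layers $i\in\{0,1\}$ only the columns $k\equiv j$ or $k\equiv j+1\pmod n$ place $\gamma_{jk}$ in $V(H_s^{(n)})$. Second, each vertex of $V_3\cap V(H_s^{(n)})$ lies in exactly two edges of $H_s^{(n)}$, one blue and one red, so $H_s^{(n)}$ is $2$-regular on $V_3$ in the sense of Proposition~\ref{prop:partite2reg}. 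Third, $\Eblue$ and $\Ered$ are each perfect matchings on $V(H_s^{(n)})$, so the bicoloring is balanced. Invoking the cited bound then produces a degree-$2n$ obstruction in every minimal generating set of $I_{H_{rc}}$, and since $n$ can be chosen up to $\min(r,c)$, the Markov complexity grows without bound as $r,c$ increase. The main obstacle is building $H_s^{(n)}$ so that all three properties hold simultaneously; the cyclic-shift design above is tailored so that the induced-ness check reduces to the transparent observation about the omitted layer $i=2$ and the omitted off-diagonal columns.
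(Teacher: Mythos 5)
Your proposal is correct and follows essentially the same route as the paper: both exhibit, inside the $3\times r\times c$ hypergraph, an induced subhypergraph supported on only two layers that is $2$-regular on one partition class and decomposes into a pair of disjoint perfect matchings, and then invoke the remark built on Proposition~\ref{prop:partite2reg} to get an indispensable binomial of degree $2\min(r,c)$. Your cyclic-shift edge set is just an explicit writing-out of the ``cycle on $K_{r,c}$'' construction the paper cites from Example~\ref{ex:no3way}, so no substantive difference remains.
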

\begin{proof}
For the no 3-way interaction model on $2 \times r \times c$ contingency tables, we can construct a primitive binomial $f_{H_s}$ of degree $2 \cdot \min(r,c)$ in its defining toric ideal by taking a cycle of length $\min(r,c)$ on the bipartite graph $K_{r,c}$.  (We remind the reader that this is precisely how $f_{\W}$ is constructed in Example \ref{ex:no3way}).  By noting that the hypergraph associated to this binomial $H_s$ is an induced subhypergraph of the hypergraph associated to the $3 \times r \times c$ case and that $H_s$ is 2-regular in one of the partitions, the claim follows by Proposition~\ref{prop:partite2reg}.
\end{proof}

\section{General degree bounds and an application}
\label{sec:uniform}
\label{sec:nonuniform}
\label{sec:application}

For uniform hypergraphs, balanced edge sets are referred to as monomial walks. In the previous sections, we saw that splitting sets of $\W$ translate to algebraic operations on the binomials $f_{\W}$, providing a general construction for rewriting a high-degree binomial  in terms of binomials corresponding to shorter walks.  This, along with Lemma~\ref{lm:properSplittingGiveDecomposition}, is the key to the general degree bound result.

\begin{theorem}
\label{thm:uniformDegreeBound} 
	Given a $k$-uniform hypergraph $H$, the toric ideal  $I_H$ is generated in degree at most $d$ if and only if for every primitive monomial walk $\W$ of length $2n>2d$, with $\supp(\W) \subseteq H$, one of the following two conditions hold:

	i) there exists a proper splitting set $S$ of $\W$,

	\emph{or}

	ii) there is a finite sequence of pairs, $(S_1,R_1), \ldots, (S_N,R_N)$, such that

	\begin{itemize}
	\item $S_1$ and $R_1$ are blue and red splitting sets of $\W$ of size less than $n$ with decompositions $(\varGamma_{1_1}, S_1, 	\varGamma_{2_1})$ and $(\varUpsilon_{1_1}, R_1,\varUpsilon_{2_1})$,
	\item $S_{i+1}$ and $R_{i+1}$ are blue and red splitting sets of $\W_i=\varGamma_{2_{i_{blue}}} \sqcup_m \varUpsilon_{1_{i_{red}}}$ of size less than $n$ with decompositions $(\varGamma_{1_{i+1}}, S_{i+1}, \varGamma_{2_{i+1}})$ and $(\varUpsilon_{1_{i+1}}, R_{i+1,}\varUpsilon_{2_{i+1}})$, and,
	\item $S_N \cap R_N \neq \emptyset$ or there exists a proper splitting set of $\W_{N}$.
	\end{itemize}
\end{theorem}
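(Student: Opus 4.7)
The plan is to interpret splitting sets algebraically through the identity
\[
t^S \cdot f_\E = t^{\varGamma_{1_{blue}}} \cdot f_{\varGamma_2} + t^{\varGamma_{2_{red}}} \cdot f_{\varGamma_1},
\]
which holds for any splitting set $S$ of a balanced edge set $\E$ with decomposition $(\varGamma_1, S, \varGamma_2)$; it is simply the telescoped expansion of $t^{\varGamma_{1_{blue}}}t^{\varGamma_{2_{blue}}} - t^{\varGamma_{1_{red}}}t^{\varGamma_{2_{red}}}$, using the multiset relations $\E_{blue}\sqcup S = \varGamma_{1_{blue}}\sqcup\varGamma_{2_{blue}}$ and $\E_{red}\sqcup S = \varGamma_{1_{red}}\sqcup\varGamma_{2_{red}}$ built into Definition~\ref{defn:walkReducible}. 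I will prove sufficiency by strong induction on the size of primitive monomial walks, and necessity by extracting the required splitting data from any expression of $f_\W$ in terms of degree-$\le d$ generators.

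For sufficiency, let $J_d\subseteq I_H$ denote the subideal generated by all binomials of degree $\le d$, and prove $f_\W\in J_d$ for every primitive $\W$ by induction on $|\W|$. Walks with $|\W|\le 2d$ are trivial. For $|\W|=2n>2d$, if (i) holds with proper splitting set $S$, Lemma~\ref{lm:properSplittingGiveDecomposition} gives $|\varGamma_1|,|\varGamma_2|<2n$; after decomposing $\varGamma_1$ and $\varGamma_2$ further into primitive components and invoking the inductive hypothesis, the identity above places $t^S f_\W$ in $J_d$, and the fact that $I_H$ is a prime toric ideal containing no variable lets one peel off the monomial factor $t^S$. If instead (ii) holds, I apply the identity once for $S_i$ and once for $R_i$ at each step: each step yields a strictly smaller binomial ($f_{\varGamma_{1_i}}$ of degree $|S_i|<n$ and $f_{\varUpsilon_{2_i}}$ of degree $|R_i|<n$), together with the telescoping identity $f_{\W_{i-1}} = f_{\varUpsilon_{1_i}} + f_{\varGamma_{2_i}} - f_{\W_i}$ (obtained by direct expansion) that passes the remaining ``same-degree'' burden to the successor walk $\W_i$. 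At termination, Case A reduces $f_{\W_N}$ via (i) applied directly to $\W_N$, while in Case B a common edge $e\in S_N\cap R_N$ supplies an extra divisibility by $t_e$ in the two accumulated monomial factors $t^{S_N}$ and $t^{R_N}$, providing the cancellation needed to conclude $f_\W\in J_d$.

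For necessity, fix a primitive $\W$ of length $2n>2d$ and a representation $f_\W=\sum_i p_i h_i$ with each $h_i\in I_H$ a primitive binomial of degree $\le d$. Matching the positive monomial $t^{\W_{blue}}$ of $f_\W$ forces some $h_1=t^{\alpha_1}-t^{\beta_1}$ with $\alpha_1\subseteq\W_{blue}$ as a multiset, and the balanced edge set $\alpha_1\sqcup_m\beta_1$ then produces a blue splitting set $S_1:=\beta_1$ of $\W$ of size $|S_1|\le d<n$: indeed the decomposition can be taken as $\varGamma_1=\alpha_1\sqcup_m\beta_1$ and $\varGamma_2=(\W_{blue}\sqcup\beta_1 - \alpha_1)\sqcup_m\W_{red}$, both balanced thanks to the balancing of $\W$ and of $h_1$. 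A symmetric argument on $t^{\W_{red}}$ extracts a red splitting set $R_1$ of size $<n$. The residue $\W_1=\varGamma_{2_{1_{blue}}}\sqcup_m\varUpsilon_{1_{1_{red}}}$ is exactly the walk obtained from $\W$ by performing one simultaneous bidirectional Markov exchange, and iterating this extraction along the reduction generates the required sequence $(S_i,R_i)$. The procedure must terminate, either by producing a residual walk $\W_N$ that admits a proper split (Case A) or by fusing the two reduction chains at a common edge (Case B).

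The main obstacle is the sufficiency direction under (ii): because every intermediate walk $\W_i$ has the same size $2n$, the telescoping identity by itself does not lower degrees, so one must simultaneously track the strictly smaller binomials $f_{\varGamma_{1_i}}, f_{\varUpsilon_{2_i}}$ emitted at each step and the monomial factors $t^{S_i}, t^{R_i}$ that accumulate when converting the multiset-level identities into polynomial reductions. Verifying that the termination conditions ($S_N\cap R_N\ne\emptyset$ or a proper split of $\W_N$) supply exactly the cancellation needed to pass from a relation of the form $t^{\alpha}\cdot f_\W\in J_d$ to $f_\W\in J_d$ is the combinatorial heart of Theorem~\ref{thm:uniformDegreeBound}, and the bookkeeping for this monomial cancellation is where I expect the argument to be most delicate.
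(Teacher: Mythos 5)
Your overall architecture matches the paper's: induction on the length of the primitive walk for sufficiency, extraction of splitting data from a degree-$\le d$ representation for necessity, and your telescoping identity $f_{\W_{i-1}} = f_{\varUpsilon_{1_i}} + f_{\varGamma_{2_i}} - f_{\W_i}$ is correct and is exactly what drives the paper's computation under condition ii). But there are two genuine gaps. First, in the sufficiency direction your basic reduction step produces only $t^S f_{\W} \in [I_H]_{\le d}$, which you then propose to fix by ``peeling off $t^S$ because $I_H$ is a prime toric ideal containing no variable.'' Primeness gives $I_H : t^S = I_H$, but what you need is that $t^S f_{\W}$ lying in the subideal generated by degree-$\le d$ binomials forces $f_{\W}$ itself to lie there, i.e.\ $[I_H]_{\le d} : t^S = [I_H]_{\le d}$ --- and $[I_H]_{\le d}$ is in general a proper subideal of $I_H$ for which this colon statement is precisely what is in question. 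The step is avoidable: you chose the wrong telescoping. Since $S = \varGamma_{1_{red}} \cap \varGamma_{2_{blue}}$, the monomial $t^S$ divides both $t^{\varGamma_{2_{blue}}}$ and $t^{\varGamma_{1_{red}}}$, so the other grouping $t^S f_{\W} = t^{\varGamma_{2_{blue}}} f_{\varGamma_1} + t^{\varGamma_{1_{red}}} f_{\varGamma_2}$ divides through cleanly; the paper then uses primitivity of $\W$ to pin the quotients down as $u/u_1$ and $v/v_2$ and obtains the exact identity $f_{\W} = m_1 f_{\varGamma_1} + m_2 f_{\varGamma_2}$ with honest monomial coefficients, so no colon argument is ever needed --- the ``delicate monomial cancellation'' you flag at the end simply does not arise.

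Second, the necessity direction is asserted rather than proved: ``the procedure must terminate, either by producing a residual walk $\W_N$ that admits a proper split or by fusing the two chains at a common edge'' is the entire content of that direction, and nothing in your extraction forces either outcome. The paper gets there by first reordering the representation $f_{\W} = \sum m_i(u_i - v_i)$ into a telescoping chain ($m_1 u_1 = u$, $m_i v_i = m_{i+1} u_{i+1}$, $m_r v_r = v$) and then \emph{merging} consecutive terms whenever $\gcd(m_i, m_{i+1}) \neq 1$, so that adjacent cofactors are coprime and each merged binomial still has degree less than $n$. That coprimality is what makes the two-term case yield a \emph{proper} splitting set, and the parity of the resulting chain length $r$ is what decides the terminal alternative: for $r = 2N+1$ a variable dividing the middle cofactor $m_{N+1}$ divides both $v_N$ and $u_{N+2}$, giving $S_N \cap R_N \neq \emptyset$, while for $r = 2N+2$ the two surviving middle terms reproduce the two-term case on $\W_N$ and give a proper splitting set there. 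Without the reordering and the gcd-merging, your greedy extraction of $S_1 = \beta_1$ and $R_1$ from arbitrary generators matching $t^{\W_{blue}}$ and $t^{\W_{red}}$ has no reason to land in one of the two prescribed terminal states, and the properness and nonempty-intersection claims are exactly the points left unjustified.
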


\begin{proof}[Proof of necessity ($\Rightarrow$)]
Let $H$ be a $k$-uniform hypergraph whose toric ideal $I_H$ is generated in degree at most $d$. Let $\W$ be a primitive monomial walk of length $2n>2d$. Let $p_{\W}=u-v$ be the binomial that arises from ${\W}$. Since $I_H$ is generated in degree at most $d$, there exist primitive binomials of degree at most $d$, $(u_1-v_1), \ldots, (u_s-v_s) \in k[t_{e_i}]$, and  $m_1, \ldots, m_s \in k[t_{e_i}]$, such that
\[p_{\W}=m_1(u_1 -v_1)+m_2(u_2-v_2)+ \ldots + m_s(u_s-v_s).\]
By expanding and reordering so that $m_1u_1=u_w$, $m_sv_s=v_w$,  and $m_iv_i=m_{i+1}u_{i+1}$ for all $i=1, \ldots, s-1$, we may and will assume that $m_1,\dots, m_s$ are monomials. 

If $\gcd(m_i, m_{i+1}) \neq 1$ for some $i$, we can add the terms $m_i(u_i-v_i)$ and $m_{i+1}(u_{i+1}-v_{i+1})$ to get a new term, $m_i'(u_i'-v_i')$, where $m_i'=\gcd(m_i, m_{i+1})$ and $(u_i'-v_i')$ is an binomial of $I_H$ of degree less than $n$. Continuing recursively in the manner, we have
\[
	p_{\W}=m_1'(u_1' -v_1')+m_2'(u_2'-v_2')+ \ldots + m_r'(u_r'-v_r')
\]
where $m_1'u_1'=u_w'$, $m_r'v_r'=v_w'$, $m_i'v_i'=m_{i+1}'u_{i+1}'$, $\gcd(m_i', m_{i+1}')=1$ for all $i=1, \ldots, r-1$,  and $\deg (u_i'-v_i')<n$ for all $i=1,\ldots r$.  For convenience, we will drop the superscripts and write
\[p_w=m_1(u_1 -v_1)+m_2(u_2-v_2)+ \ldots + m_r(u_r-v_r).\]

{\bf Case 1:} $r=2$.
In this case,  $p_{\W}=m_1(u_1 -v_1)+m_2(u_2-v_2)$. Let
\begin{align*}
\varGamma_1&:=E(u_1) \sqcup_m E(v_1)\\
\varGamma_2&:=E(u_2) \sqcup_m E(v_2)\\
S&:=E(v_1) \cap E(u_2)=E(\gcd(v_1,u_2)).
\end{align*}
We want to show $(\varGamma_1, S, \varGamma_2)$ is a decomposition of $\W + S$. Since $S=\Gonered \cap \Gtwoblue$, $\Goneblue \subseteq \W_{blue}$, and $\Gtwored \subseteq \W_{red}$, we only need to show $\W+S=\varGamma_1 \sqcup \varGamma_2$, $\Gtwored \subseteq (\W + S)_{red}$, and $\Gtwoblue \subseteq (\W + S)_{blue}$.
First, notice the following equalities hold:
\begin{align*}
\W+S &= (\W_{blue} \sqcup S) \sqcup (\W_{red} \sqcup S)
	=E(u) \sqcup S \sqcup E(v) \sqcup S\\
	&= E(m_1u_1) \sqcup S \sqcup E(m_2v_2) \sqcup S
	 =E(m_1) \sqcup E(u_1) \sqcup S \sqcup E(m_2) \sqcup E(v_2) \sqcup S.
\end{align*}

Let $s\in k[t_{e_i}]$ be the monomial such that $E(s)=S$, so $s=\gcd(v_1,u_2)$.  The equality $m_1v_1=m_2u_2$ implies $m_1(\frac{v_1}{s})=m_2(\frac{u_2}{s})$. Now, $\frac{v_1}{s}$ and $\frac{u_2}{s}$ are clearly relatively prime, and by the assumptions on $p_{\W}$, $m_1$ and $m_2$ are relatively prime.  This means the equality $m_1(\frac{v_1}{s})=m_2(\frac{u_2}{s})$ implies $m_1=\frac{u_2}{s}$ and $m_2=\frac{v_1}{s}$.
Thus,
\begin{align*}
\varGamma_1 \sqcup \varGamma_2 &= 	E(u_1) \sqcup E(v_1) \sqcup E(u_2) \sqcup E(v_2)\\
&= E(u_1) \sqcup E(\frac{v_1}{s}) \sqcup S \sqcup E(v_2) \sqcup E(\frac{u_2}{s}) \sqcup S\\
&=E(u_1) \sqcup E(m_2) \sqcup S \sqcup E(v_2) \sqcup E(m_1) \sqcup S.
\end{align*}

Consequently, $\W+S=\varGamma_1 \sqcup \varGamma_2$.  

Notice the equality $m_2=\frac{v_1}{s}$ also implies $\Gonered=E(v_1)=E(m_2) \sqcup S$.  This means $\Gonered \subseteq (E(m_2u_2) \sqcup S)=(\W_{red} \sqcup S)=(\W+S)_{red}$.  By a similar observation, $\Gtwoblue \subseteq (\W+S)_{blue}$.

{\bf 
Case 2:} $r=2N+1$.
For $1< i <N$, let
\begin{align*}
\varGamma_{1_i}&=E(u_i) \sqcup_m E(v_i)\\
\varGamma_{2_i}&=E(m_{i+1}u_{i+1}) \sqcup_m E(m_{2N-i+2}v_{2N-i+2})\\
S_i&=E(v_i) \cap E(m_{i+1}u_{i+1})=E(\gcd(v_i,  m_{i+1}u_{i+1}))=E(v_i).
\end{align*}

For $1< i <N$, let 
\begin{align*}
\varUpsilon_{1_i}&=E(m_iu_i) \sqcup_m E(m_{2N-i+1}v_{2N-i+1})\\
\varUpsilon_{2_i}&=E(u_{2N-i+2}) \sqcup_m E(v_{2N-i+2})\\
R_i&=E(m_{2N-i+1}v_{2N-i+1}) \cap E(u_{2N-i+2})
\\&=E(\gcd(m_{2N-i+1}v_{2N-i+1},u_{2N-i+2}))=E(u_{2N-i+2}).
\end{align*}
One can follow the proof of Case 1) to see that $S_1$ and $R_1$ are splitting sets of $\W$, and $S_{i+1}$ and $R_{i+1}$ are splitting sets of $\W_i=E(m_{i+1}u_{i+1}) \sqcup_m E(m_{2N-i+1}v_{2N-i+1})$ for $i=1, \ldots, N-1$. Furthermore, by definition, they are blue and red splitting sets (resp.) of size less than $2n$. 

Since $\W_{{N-1}_{blue}}=\varGamma_{2_{{N-1}_{blue}}}$ and $\W_{{N-1}_{red}}=\varUpsilon_{1_{{N-1}_{red}}}$, the binomial arising from the walk on $\W_{N-1}$  is 
\[
	m_{N}u_{N}-m_{N+2}v_{N+2}=m_{N}(u_N-v_N)+m_{N+1}(u_{N+1}-v_{N+1})+m_{N+2}(u_{N+2}-v_{N+2}).
\]

Choose $e\in H$ such that $t_e \mid m_{N+1}$, then $t_e \mid v_N$ and $t_e \mid u_{N+2}$. But since $S_N=E(v_N)$ and $R_N=E(u_{N+2})$, $e \in S_N$ and $e \in R_N$, so $S_N\cap R_N\neq \emptyset$.

{\bf 
Case 3:} $r=2N+2$. 
For $1< i <N$, let
\begin{align*}
\varGamma_{1_i}&=E(u_i) \sqcup_m E(v_i)\\
\varGamma_{2_i}&=E(m_{i+1}u_{i+1}) \sqcup_m E(m_{2N-i+3}v_{2N-i+3})\\
S_i&=E(v_i) \cap E(m_{i+1}u_{i+1})=E(\gcd(v_i,  m_{i+1}u_{i+1}))=E(v_i).
\end{align*}

For $1< i <N$, let 
\begin{align*}
\varUpsilon_{1_i}&=E(m_iu_i) \sqcup_m E(m_{2N-i+2}v_{2N-i+2})\\
\varUpsilon_{2_i}&=E(u_{2N-i+3}) \sqcup_m E(v_{2N-i+3})\\
R_i&=E(m_{2N-i+2}v_{2N-i+2}) \cap E(u_{2N-i+3})
\\&=E(\gcd(m_{2N-i+2}v_{2N-i+2},u_{2N-i+3}))=E(u_{2N-i+3}).
\end{align*}

We can follow the proof of Case 1) to see that $S_1$ and $R_1$ are splitting sets of $\W$, and $S_{i+1}$ and $R_{i+1}$ are splitting sets of $\W_i=E(m_{i+1}u_{i+1}) \sqcup_m E(m_{2N-i+2}v_{2N-i+2})$ for $i=1, \ldots, N-1$. Furthermore, by definition, they are blue and red (resp.) splitting sets of size less than $n$. 
Since $\W_{{N}_{blue}}=\varGamma_{2_{{N}_{blue}}}$ and $\W_{{N}_{red}}=\varUpsilon_{1_{{N}_{red}}}$, the binomial arising from $\W_{N}$  is
\[
m_{N+1}u_{N+1}-m_{N+2}v_{N+2}=m_{N+1}(u_{N+1}-v_{N+1})+m_{N+2}(u_{N+2}-v_{N+2})
\]
which is exactly case 1), which means there exists a proper splitting set of $\W_{N}$. 

\end{proof}

\begin{proof}[Proof of sufficiency ($\Leftarrow$)]
Assume every primitive monomial walk $\W$ of length $2n >2d$ with $\supp(\W) \subset H$ satisfies $i)$ or $ii)$.  Let $p_{\W}=u-v$ be a generator of $I_H$ which arises from the monomial walk ${\W}$ on $H$.  

To show that  $I_H=[I_H]_{\leq d}$, we proceed by induction on the degree of $p_{\W}$. If $\deg p_{\W}=2$, then $p_{\W} \in [I_H]_{\leq d}$. So assume $\deg p_{\W}=n>d$ and every generator of $I_H$ of degree less than $n$ is in $[I_H]_{\leq d}$. Since the size of $\W$ is greater than $2d$, either condition $i)$ holds or condition $ii)$ holds. 

Suppose $i)$ holds. By Lemma 3.5, there exists a decomposition of $\W$, $(\varGamma_1, S, \varGamma_2)$, such that $|\varGamma_1|<|\W|$ and $|\varGamma_2|<|\W|$.  Let $p_{\varGamma_1}=u_1-v_1$ ($p_{\varGamma_2}=u_2-v_2$, respectively) be the binomial that arises from $\varGamma_1$ ($\varGamma_2$, respectively).  Let $m_1=u/u_1$ and $m_2=v/v_2$.

What remains to be shown is that $p_{\W}=m_1p_{\varGamma_1}+ m_2 p_{\varGamma_2}$, that is, $u-v=m_1(u_1-v_1)+m_2(u_2-v_2)$. However,  it is clear that $u=m_1u_1$ and $v=m_2v_2$, so it suffices to show is that $m_1v_1=m_2u_2$, or equivalently, $E(m_1v_1)=E(m_2u_2)$.

Let $s \in k[t_{e_i}]$ be the monomial such that $E(s)=S$. Then
\[
	\varGamma_1 \sqcup \varGamma_2=(E(u_1) \sqcup E(\frac{v_1}{s}) \sqcup S) \sqcup (E(\frac{u_2}{s}) \sqcup S \sqcup E(v_2))
\]
and
\[
	\W+S=(E(m_1)\sqcup E(u_1) \sqcup S) \sqcup (E(m_2) \sqcup E(v_2) \sqcup S).
\]
Thus, since $\W+S=\varGamma_1 \sqcup \varGamma_2$,
\[
	E(m_1) \sqcup E(m_2) =  E(\frac{v_1}{s}) \sqcup E(\frac{u_2}{s}),
\]
which in turn implies 
\[
	m_1m_2=(\frac{v_1}{s})(\frac{u_2}{s}).
\]
Since $\W$ is primitive and the coloring conditions on $(\varGamma_1,S,\varGamma_2)$ imply $E(\frac{v_1}{s}) \subseteq \W_{red}$ and $E(m_1) \subseteq \W_{blue}$, the monomials $m_1$ and $\frac{v_1}{s}$ are relatively prime.  A similar argument shows $m_2$ and $\frac{u_2}{s}$ are relatively prime.  Thus, $m_1=\frac{u_2}{s}$ and $m_2=\frac{v_1}{s}$, and consequently, $E(m_1v_1)=E(m_2u_2)$ and $p_w=m_1p_{\varGamma_1}+ m_2 p_{\varGamma_2}$.

Since $\deg p_{\varGamma_1}, \deg p_{\varGamma_2}<n$,  the induction hypothesis applied to $p_{\varGamma_1}$ and $p_{\varGamma_2}$ shows that $p_{\W} \in [I_H]_{\leq d}$.

Now suppose  $ii)$ holds. For $i$ from 1 to $N$, let $p_{\varGamma_{1_i}}=u_i-v_i$ and $p_{\varUpsilon_{2_i}}=y_{i}-z_{i}$ be the binomials arising from $\varGamma_{1_i}$ and $\varUpsilon_{2_i}$.  Let $w_{i_b}-w_{i_r}$ be the binomial arising from the walk $\W_i$ and let $p_{\W}=w_{0_b}-w_{0_r}$. For $1 \leq i \leq N$, let $m_i=w_{(i-1)_b}/u_i$, and $q_i=w_{(i-1)_r}/z_i$. Then
\[
	p_{\W}= \sum_{i=1}^N m_i (u_i -v_i) + w_{N_b}-w_{N_r}+ \sum_{i=1}^{N} q_{N+1-i}(y_{N+1-i}-z_{N+1-i}).
\]
The preceding claim follows from three observations: (1) by construction, $w_{0_b}=m_1u_1$ and $w_{0_r}=q_1z_1$; (2) by the definition of $\W_N$, $w_{N_b}=m_Nv_N$ and $w_{N_r}=q_{N}y_N$;  and  (3) by the definitions of $m_i$, $q_i$, and the walk $\W_i$, $m_iv_i=m_{i+1}u_{i+1}$ and $q_{i+1}z_{i+1}=q_{i}y_{i}$ for $1 \leq i \leq N-1$.  As a consequence of the size conditions on the splitting sets of $\W_i$,  the linear combination $\sum_{i=1}^N m_i (u_i -v_i) \in [I_H]_{\leq d}$ and $\sum_{i=1}^{N} q_{N+1-i}(y_{N+1-i}-z_{N+1-i})\in [I_H]_{\leq d}$. So if $\W_N$ satisfies condition $i)$, the binomial $w_{N_b}-w_{N_r}\in [I_H]_{\leq d}$, and thus, $p_{\W} \in [I_H]_{\leq d}$.

To finish the proof, assume that $S_N$ and $R_N$ share an edge, $e$. Then the claim above becomes:
\[
	p_{\W}= \sum_{i=1}^N m_i (u_i -v_i) + t_e(\frac{m_Nv_N}{t_e} - \frac{q_{N}y_{N}}{t_e})+  \sum_{i=1}^{N} q_{N+1-i}(y_{N+1-i}-z_{N+1-i})
\]
and we just need to show that, in fact, $t_e$ divides $m_Nv_N$ and $q_{N}y_{N}$. But this is clear to see since $e \in S_N$ which implies $t_e | v_N$ and $e \in R_N$ which implies $t_e | y_{N}$.

\end{proof}

\begin{eg}[{\bf Independence models}] Let $H$ be the complete $k$-partite hypergraph with $d$ vertices in each partition $V_1, \ldots, V_k$.  These hypergraphs correspond to the independence model in statistics. Equivalently, the edge subring of the complete $k$-partite hypergraph with $d$ vertices in each partition parametrizes the Segre embedding of $\mathbb P^d\times\dots\times\mathbb P^d$ with $k$ copies.

The ideal $I_H$ is generated by quadrics. To see this, let $\W$, $\supp(\W) \subseteq H$, be a primitive monomial walk of length $2n$, $n>2$. 
Choose a multiset $E'\subset \W$ consisting of $n-1$ blue and $n-1$ red edges. 
 Since each edge must contain a vertex from each $V_i$, for each $i$, there is at most one vertex in $V(E') \cap V_i$ that is not covered by a red edge and a blue edge from $E'$. Consequently,  $V(E')$  contains  a vertex from each $V_i$ that belong to at least one red edge and at least one blue edge of $E'$.

For a multiset of edges, $M$, with $\supp(M) \subseteq H$, we define the \emph{max degree} of a vertex:
\[
	\maxdeg(v; M):=\max(\deg_{red}(v; M), \deg_{blue}(v;M)).
\]
The partitioning of the vertices ensures that $V(E')$ cannot contain more then $k$ vertices whose $\maxdeg$ with respect to $E'$ is $n-1$.  Indeed, if there are more that $k$ vertices with $\maxdeg$ equal to $n-1$, then two of those vertices must belong to the same partition, $V_j$.  This would imply that $\W$ contains at least $4(n-1)$ edges, which is impossible when $n > 2$.

Next, choose $n-1$ new blue edges and $n-1$ red edges in the following manner:

Let $d_b(v):=\deg_{blue}(v; E')$ and $d_r(v):=\deg_{red}(v;E')$. For $i=1, \ldots, k$ choose a vertex from $V(E'_{blue}) \cap V(E'_{red}) \cap V_i$ that has the largest $\maxdeg$ with respect to $E'$; let $b_{n-1}$ and $r_{n-1}$ be this set of vertices. For all $v \in b_{n-1}$, reduce $d_b(v)$ and $d_r(v)$ by 1.  Now choose $b_1, \ldots, b_{n-2}$ by the following algorithm:

{\small
{\ttfamily
\quad for $i$ from 1  to k do:\\
\phantom{xxxx} let $V_i:=$sort $V(E')\cap V_i$ by $d_b(v)$ in decreasing order; \\
\phantom{x} for $j$ from $n-2$ down to 1 do: \\
\phantom{x} $($\\
\phantom{xxxx} $b_j:=$ list $\{v_i: v_i$ is first element in $V_i$\};\\
\phantom{xxxx} for all $v \in b_j$ do $d_b(v)=d_b(v)-1$;\\
\phantom{xxxx} for $i$ from 1 to $k$ do $V_i=$sort $V_i$ by $d_b(v)$ in decreasing order;\\
\phantom{x} $)$.
}
}

Let $R_1=\{b_1, \ldots, b_{n-1}\}$ and $S_1=\{r_1, \ldots, r_{n-1} \}$. Then $R_1$ and $S_1$ are red and blue splitting sets of $\W$ that share an edge.  Thus, condition ii) of Theorem~\ref{thm:uniformDegreeBound}  is met, and consequently  $I_H$ is generated in degree $2$. 
\end{eg}


When $H$ is a non-uniform hypergraph, the toric ideal $I_H$ is not necessarily homogeneous. For example, Figure~\ref{fig:nonuniform} supports a  binomial in $I_H$ where $H$ consists of edges of size two and four; note that the edges still satisfy the  balancing condition \eqref{degreeCondition}. However, we can still modify the conditions of Theorem~\ref{thm:uniformDegreeBound} to find degree bounds for the toric ideals of non-uniform hypergraphs.  Proposition~\ref{thm:nonuniform} gives a prescription for determining a degree bound on the generators of $I_H$ in terms of local structures of $H$.

\begin{figure}
\begin{center}  
\includegraphics[width=2in]{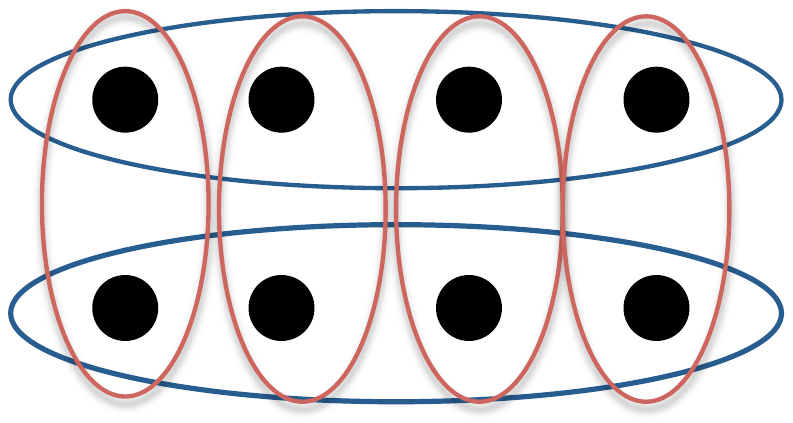}
\caption{}
\label{fig:nonuniform}
\end{center}
\end{figure}

\begin{proposition}
\label{thm:nonuniform}
	Given a hypergraph $H$ and  a binomial $f_{\E}\in I_H$ arising from the balanced edge set $\E$ with $n=|\E_{blue}| \geq |\E_{red}|$,  $f_{\E}$ is a linear combination of binomials in $I_H$ of degree less than $n$ if one of the following two conditions hold:

	i) there exists a proper splitting set $S$ of $\E$ with decomposition $(\varGamma_1, S, \varGamma_2)$ where $|\varGamma_{i_{blue}}|, |\varGamma_{i_{red}}| < n$ for $i=1,2$,

	\emph{or}
	
	ii) there is a pair of blue and red splitting sets of $\E$, $S$ and $R$, of size less than $n$ with decompositions $(\varGamma_1, S, \varGamma_2), (\varUpsilon_1, R, \varUpsilon_2)$ such that $|\varGamma_{1_{blue}}|, |\varUpsilon_{2_{red}}| < n$, $|\varGamma_{2_{blue}}|, |\varUpsilon_{1_{red}}| \leq n$, and $S \cap R\neq \emptyset$. 
\end{proposition}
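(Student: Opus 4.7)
The plan is to adapt the sufficiency argument from Theorem~\ref{thm:uniformDegreeBound} to the non-uniform setting, treating the blue and red sides of each decomposition separately since the binomials in $I_H$ are no longer homogeneous. I will write $u, v, u_i, v_i, y_i, z_i, s, r$ for the monomials in $k[t_{e}]$ corresponding to $\E_{blue}$, $\E_{red}$, $\varGamma_{i_{blue}}$, $\varGamma_{i_{red}}$, $\varUpsilon_{i_{blue}}$, $\varUpsilon_{i_{red}}$, $S$, $R$, so that $f_\E = u - v$.

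For condition (i), I would replay the proper-splitting argument from the sufficiency proof of Theorem~\ref{thm:uniformDegreeBound}. Properness makes $m_1 := u_2/s$ and $m_2 := v_1/s$ honest monomials; the multiset identity $\E + S = \varGamma_1 \sqcup \varGamma_2$ together with $S = \Gonered \cap \Gtwoblue$ then forces $u = m_1 u_1$ and $v = m_2 v_2$, whence $f_\E = m_1 f_{\varGamma_1} + m_2 f_{\varGamma_2}$. The hypothesis $|\varGamma_{i_{blue}}|, |\varGamma_{i_{red}}| < n$ immediately certifies that $f_{\varGamma_1}$ and $f_{\varGamma_2}$ are binomials of degree less than $n$.

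For condition (ii), the strategy is to peel one layer off each side of $f_\E$ simultaneously. Blueness of $S$ forces $\Gonered = S$, i.e.\ $v_1 = s$, and symmetrically redness of $R$ gives $\varUpsilon_{2_{blue}} = R$, i.e.\ $y_2 = r$. Matching multisets in $\E + S = \varGamma_1 \sqcup \varGamma_2$ and $\E + R = \varUpsilon_1 \sqcup \varUpsilon_2$ then yields $u = m\,u_1$ and $v = q\,z_2$ for $m := u_2/s$ and $q := z_1/r$. Since $m v_1 = u_2$ and $q y_2 = z_1$, the telescoping identity
\[
	f_\E \;=\; m\,(u_1 - v_1) \;+\; (u_2 - z_1) \;+\; q\,(y_2 - z_2)
\]
expresses $f_\E$ as a sum of three binomials; the outer two have monomial degrees bounded by $|\varGamma_{1_{blue}}|, |S|, |R|, |\varUpsilon_{2_{red}}|$, each strictly less than $n$ by hypothesis.

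What remains is the middle term. I would verify that the bicolored multiset $\Gtwoblue \sqcup_m \varUpsilon_{1_{red}}$ is a balanced edge set, so that $u_2 - z_1$ lies in $I_H$: balancedness of $\varGamma_2$ combined with $\E_{red} = \Gtwored$ gives $\deg(v;\Gtwoblue) = \deg(v;\E_{red})$, balancedness of $\varUpsilon_1$ combined with $\E_{blue} = \varUpsilon_{1_{blue}}$ gives $\deg(v;\varUpsilon_{1_{red}}) = \deg(v;\E_{blue})$, and these coincide because $\E$ is itself balanced. Then any shared edge $e \in S \cap R$ divides both $u_2$ and $z_1$ (as $e \in S \subseteq \Gtwoblue$ and $e \in R \subseteq \varUpsilon_{1_{red}}$), so $u_2 - z_1 = t_e\,(u_2/t_e - z_1/t_e)$ is $t_e$ times a binomial of degree at most $\max(|\Gtwoblue|, |\varUpsilon_{1_{red}}|) - 1 \le n - 1$, as required. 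I expect the main obstacle to be exactly this middle-term bookkeeping: in the non-uniform setting there is no common edge size to simplify the degree arithmetic, so balancedness of the cross-term must be deduced solely from the two decompositions of $\E + S$ and $\E + R$ together with the balancedness of $\E$.
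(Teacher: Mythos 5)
Your argument is correct and is essentially the paper's own proof: the paper disposes of this proposition in two sentences by deferring to the sufficiency direction of Theorem~\ref{thm:uniformDegreeBound}, observing that uniformity was only used there to bound the sizes of the blue and red parts of the decompositions, which here are hypothesized instead. Your write-up is exactly that argument specialized to a single pair of splitting sets ($N=1$), with the telescoping identity, the balancedness of the cross-term $\Gtwoblue \sqcup_m \varUpsilon_{1_{red}}$, and the degree bookkeeping made explicit.
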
 

\begin{proof} This proof follows the proof of sufficiency for Theorem~\ref{thm:uniformDegreeBound}. Note that in the proof, the uniform condition doesn't play an essential role; it is only invoked to bound the size of the red and blue parts of each monomial hypergraph appearing in the  decompositions involved. Thus, the hypothesis of Proposition~\ref{thm:nonuniform} acts in  place of the uniform condition in Theorem~\ref{thm:uniformDegreeBound}.
\end{proof}

We close with an application.

For the remainder of this section,  we will concern ourselves with  the first tangential variety, $\Tan((\PP^1)^n)$. 
In \cite{SZ}, Sturmfels and Zwiernik use cumulants to give a monomial parameterization of  $\Tan((\PP^1)^n)$. The variety  $\Tan((\PP^1)^n)$ is associated to a class of hidden subset models \cite[Example 5.2]{SZ}, and context-specific independence models \cite{Oed}.
We now  derive a bound for the toric ideal of the image of $\Tan((\PP^1)^n)$ in higher cumulants and, equivalently, for the Markov complexity of these models. 

\begin{eg}\label{eg:cumulants}
Let $H=(V,E)$ where $V=\{1, \ldots, n\}$ and $E=\{ e \ : \ e \subseteq V \text{ and } |e| \geq 2\}$. Then the set of polynomials vanishing on the image of $\Tan((\PP^1)^n)$ in higher cumulants is the toric ideal $I_H$ (see  \cite[Theorem 4.1]{SZ}). 
\end{eg}

The hypergraph in Example~\ref{eg:cumulants} is the complete hypergraph on $n$ vertices after removing all singleton edges. The degree bound on the generators of this hypergraph can be found by looking at a smaller hypergraph.

\begin{lemma}\label{prop:bounds} Let $H_1=(V,E_1)$ where $V=\{1, \ldots, n\}$ and  $E_1=\{ e \ : \ e \subseteq V \text{ and } |e| \geq 2\}$, and let $H_2=(V, E_2)$ where  $E_2=\{ e \subseteq V \ : 2 \leq |e| \leq 3 \}$. If the ideal $I_{H_2}$ is generated in  degree at most $d$, then the ideal $I_{H_1}$ is generated in  degree at most $d$.
\end{lemma}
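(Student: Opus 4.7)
My plan is to reduce $I_{H_1}$ to $I_{H_2}$ by repeatedly ``splitting'' each hyperedge of size $\geq 4$ into two smaller pieces via a degree-$2$ binomial already present in $I_{H_1}$. Concretely, for any $e \in E_1$ with $|e| \geq 4$ and any partition $e = e_1 \sqcup e_2$ with $|e_1|, |e_2| \geq 2$ (which exists since $|e| \geq 4$), the identity $x^e = x^{e_1} x^{e_2}$ yields
\[
b \;:=\; t_e - t_{e_1} t_{e_2} \;\in\; I_{H_1},
\]
a degree-$2$ element of $I_{H_1}$, and hence of degree $\leq d$ once $d \geq 2$ (we may assume this, since $d < 2$ would force $I_{H_2} = 0$ and thus $E_1 = E_2$). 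By Proposition~\ref{prop:balanced-non-uniform}, it suffices to show that for every balanced edge set $\mathcal{E}$ on $H_1$, the binomial $f_{\mathcal{E}}$ lies in the ideal generated by degree-$\leq d$ elements of $I_{H_1}$.

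I will induct on the invariant
\[
N(\mathcal{E}) \;:=\; \sum_{e \in \mathcal{E}} \max(|e| - 3, 0),
\]
counted with multiplicity, which vanishes precisely when $\supp(\mathcal{E}) \subseteq E_2$. When $N(\mathcal{E}) = 0$, the polynomial $f_{\mathcal{E}}$ involves only variables $t_{e'}$ with $e' \in E_2$, and belongs to $I_{H_2}$; viewing $I_{H_2} \subseteq I_{H_1}$ under the inclusion $k[t_{e'}:e'\in E_2] \hookrightarrow k[t_{e'}:e'\in E_1]$, the hypothesis on $I_{H_2}$ expresses $f_{\mathcal{E}}$ as a combination of $I_{H_1}$-elements of degree $\leq d$.

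For the inductive step, pick $e \in \supp(\mathcal{E})$ with $|e| \geq 4$ and, by color symmetry, assume a copy of $e$ lies in $\mathcal{E}_{blue}$. Partition $e = e_1 \sqcup e_2$ with $|e_1| = 2$ and $|e_2| = |e| - 2$, and form $\mathcal{E}'$ from $\mathcal{E}$ by replacing that copy of $e$ in $\mathcal{E}_{blue}$ with the pair $\{e_1, e_2\}$. Since $e$ and $e_1 \sqcup e_2$ cover identical vertex multisets, $\mathcal{E}'$ is again balanced, and $N(\mathcal{E}') < N(\mathcal{E})$ by a direct count (the drop is $1$ if $|e|=4$ and $2$ if $|e|\geq 5$). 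Writing $U' := \prod_{e' \in \mathcal{E}_{blue} \setminus \{e\}} t_{e'}$ (with multiplicity), expansion gives
\[
f_{\mathcal{E}} \;=\; U' \cdot b \;+\; f_{\mathcal{E}'},
\]
so the inductive hypothesis applied to $f_{\mathcal{E}'}$ places $f_{\mathcal{E}}$ in the ideal generated by degree-$\leq d$ elements of $I_{H_1}$.

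The only obstacle is light bookkeeping: verifying that $\mathcal{E}'$ is balanced (routine, since splitting $e$ into $e_1 \sqcup e_2$ preserves each vertex's blue and red degree) and that $N$ strictly decreases under each split. Notably, no splitting-set or separator machinery from Section~\ref{sec:splits-and-reducibility} is actually invoked; the substance of the lemma is simply that every ``extra'' variable of $I_{H_1}$ beyond $I_{H_2}$ is already controlled by a quadratic relation of the form $t_e - t_{e_1} t_{e_2}$.
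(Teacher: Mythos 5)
Your proof is correct and takes essentially the same route as the paper: both reduce to $I_{H_2}$ (viewed inside the larger polynomial ring) by using the quadratic relations $t_e - t_{e_1}t_{e_2}$ to break every edge of size $\geq 4$ into edges of size $2$ and $3$. The paper does the full decomposition of each large edge at once via a telescoping sum of such quadratics, whereas you peel off one $2$-edge at a time and induct on $N(\mathcal{E})$, but this is only a difference in bookkeeping, not in substance.
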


\begin{proof} Consider $I_{H_2}$ as an ideal in the bigger polynomial ring $S:=k[t_{e_i} \ : \ e_i \in H_1]$, denoted as $\tilde I_{H_2}:=I_{H_2}S$.
Assume that $I_{H_2}$, and consequently, $\tilde I_{H_2}$, is generated in degree at most $d$. Pick an arbitrary binomial 
\[u-v=t_{e_{i_1}}t_{e_{i_2}}\cdots t_{e_{i_n}}-t_{e_{j_1}}t_{e_{j_2}}\cdots t_{e_{j_m}} \in I_{H_1}.\]
Since every edge $e \in H_1$ is the disjoint union of a collection of edges $e_{k_1}, \ldots, e_{k_l} \in H_2$, we may write $t_{e}-\prod_{i=1}^l t_{e_{k_i}}  \in I_{H_1}$.  
Noting that 
\[ 
	t_{e}-\prod_{i=1}^l t_{e_{k_i}}=(t_{e} - t_{e_{k_1}}t_{\cup_{i=2}^l e_{k_i}}) - \sum_{j=1}^{l-2} \left[\left(\prod_{i=1}^j t_{e_{k_i}}\right)(t_{\cup_{i=j+1}^l e_{k_i}} - t_{e_{j+1}}t_{\cup_{i=j+2}^l e_{k_i}})\right],
\]
one easily sees that the binomial $t_{e}-\prod_{i=1}^l t_{e_{k_i}}$ is generated by quadratics.  
In turn, this essentially shows that relations in $I_{H_2}$ allow us to rewrite $u-v$ in terms of edges $e_{i_1}, \ldots, e_{i_n}, e_{j_1}, \ldots, e_{j_m} \in E_2$ of size $2$ and $3$ only. 
The claim follows since $u-v$ can be expressed as  a binomial in $\tilde I_{H_2}$. 
\end{proof}

\begin{theorem} 
\label{thm:cumulants} 
	Let $H=(V,E)$ where $V=\{1, \ldots, n\}$ and $E=\{ e \subseteq V \ : 2 \leq |e| \leq 3 \}$.  The toric ideal of $H$ is generated by quadrics and cubics. 
	
	In particular the image of $\Tan((\PP^1)^n)$ in higher cumulants is generated in degrees $2$ and $3$. 
\end{theorem}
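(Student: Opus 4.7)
By Lemma~\ref{prop:bounds}, it suffices to show that $I_{H_2}$, the toric ideal of the hypergraph $H_2 = (V, E_2)$ with $E_2 = \{e \subseteq V : 2 \leq |e| \leq 3\}$, is generated in degrees at most $3$; the bound then transfers to $I_{H_1}$, and hence to the defining ideal of the image of $\Tan((\PP^1)^n)$ in higher cumulants. The remaining task is thus to bound the degrees of generators of $I_{H_2}$, and I would do this by induction on the degree of a primitive binomial, invoking Proposition~\ref{thm:nonuniform} at the inductive step.

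The base case consists of primitive balanced edge sets $\E$ with $|\E_{blue}| \leq 3$, for which $f_\E$ already has degree at most $3$. For the inductive step, let $\E$ be primitive on $H_2$ with $n := |\E_{blue}| \geq |\E_{red}|$ and $n \geq 4$. The plan is to produce either a proper splitting set of $\E$ fulfilling condition (i) of Proposition~\ref{thm:nonuniform}, or a pair of blue and red splitting sets fulfilling condition (ii). Either outcome expresses $f_\E$ as a linear combination of binomials in $I_{H_2}$ of degree strictly less than $n$, and the induction hypothesis finishes the argument.

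The construction of the splitting set exploits the ``completeness'' of $H_2$: every $2$- and every $3$-subset of $V$ is an edge. Concretely, pick a vertex $v \in V(\E)$ and edges $b \in \E_{blue}$, $r \in \E_{red}$ both containing $v$ (such $b,r$ exist by the balancing condition). On the vertex set $V(b) \cup V(r)$, which has at most $5$ elements, $H_2$ supplies many short balanced edge sets: the Segre quadrics $t_{ij} t_{kl} - t_{ik} t_{jl}$ and the cumulant-type cubic $t_{ij} t_{ik} t_{jk} - t_{ijk}^2$, among others. The strategy is to select one such short balanced edge set $\varGamma_1$ whose blue edges sit inside $\E_{blue}$ and whose red edges share something with $\E_{red}$; the shared part plays the role of the splitting set $S$, and the complement $\varGamma_2 := (\E + S) - \varGamma_1$ is strictly smaller in both colors than $\E$. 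Where necessary, one additional ``auxiliary'' edge of $\E$ incident to $V(b) \cup V(r)$ is included to make $\varGamma_1$ balanced.

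The principal obstacle is the case analysis. Pairs $(b, r)$ come in several combinatorial types, indexed by $|b|, |r| \in \{2,3\}$ and $|b \cap r|$; in each type one must (a) exhibit the auxiliary short balanced edge set $\varGamma_1$, and (b) verify the size conditions of Proposition~\ref{thm:nonuniform} on $\varGamma_1$ and $\varGamma_2$. Primitivity of $\E$ is used to rule out trivial coincidences such as $b = r$, but it also restricts which local moves apply. The hardest subcase I expect is when $\E$ consists predominantly of $3$-edges with small pairwise intersections, forcing the use of the cumulant cubic relation $t_{ij} t_{ik} t_{jk} = t_{ijk}^2$ rather than a Segre quadric; there, careful vertex-degree bookkeeping is needed to confirm that $\varGamma_2$ is genuinely smaller than $\E$ and that the splitting set satisfies condition (i) or (ii).
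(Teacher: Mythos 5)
Your framework is the right one --- reduce to $H_2$ via Lemma~\ref{prop:bounds}, induct on degree, and close the induction by producing splitting sets that satisfy Proposition~\ref{thm:nonuniform} --- and this is indeed the skeleton of the paper's argument. But the entire mathematical content of the theorem lives in the step you defer: actually exhibiting, for every primitive balanced edge set of degree $n>3$, a splitting set meeting condition (i) or (ii). ``Select one such short balanced edge set $\varGamma_1$ whose blue edges sit inside $\E_{blue}$ and whose red edges share something with $\E_{red}$, including an auxiliary edge where necessary'' is a plan, not a construction; you acknowledge as much by calling the case analysis the principal obstacle and flagging the predominantly-$3$-edge configuration as unresolved. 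As written, the proposal does not prove the theorem.

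More importantly, your direct case analysis over all pairs $(b,r)$ with $|b|,|r|\in\{2,3\}$ and arbitrary intersections is substantially harder than what the paper actually carries out, and it is not clear it closes. The paper first performs a structural reduction that you omit: (1) walks consisting only of $2$-edges or only of $3$-edges are generated by quadrics by Theorem~14.1 of \cite{GB}; (2) the degree-at-most-$3$ relations $t_{e_1}t_{e_2}-t_{e_3}t_{e_4}t_{e_5}$, where $e_1,e_2$ are $3$-edges and $e_3,e_4,e_5$ are $2$-edges covering $V(e_1)\cup V(e_2)$, are used to trade pairs of $3$-edges for $2$-edges, reducing to the class of balanced edge sets in which each color contains exactly one $3$-edge and equally many $2$-edges. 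Only inside this class does the paper run the induction, and there the analysis collapses to three cases governed by how a blue $2$-edge meeting the unique red $3$-edge intersects the remaining red $2$-edges; crucially, the resulting decompositions stay inside the same class, so the induction is self-contained. Note also that a single blue $3$-edge cannot be balanced against red $2$-edges alone (three vertices cannot each be covered exactly once by disjointly chosen $2$-edges), so in your hardest subcase --- two $3$-edges $b,r$ meeting in one vertex --- any balanced $\varGamma_1$ must recruit further edges of $\E$ or of the splitting set, and verifying the size conditions of Proposition~\ref{thm:nonuniform} there is exactly the bookkeeping your sketch leaves open. Adopting the paper's two-stage reduction before attempting the splitting-set construction is what makes the case analysis finite and tractable.
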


In the following proof, we examine the local combinatorics of $H$ to illustrate how the structure of a hypergraph reveals insights into the generating set of $I_H$.

\begin{proof} 
Let $f_{\E}$ be a primitive binomial in $I_H$ with $\E$ a balanced edge set. 
Without loss of generality, we will assume throughout the proof $|\E_{blue}| \geq |\E_{red}|$. 
If $\E$ contains only 2-edges or only 3-edges, then by \cite[Theorem 14.1]{GB} $f_{\E}$ is a linear combination of quadratics.  So we will assume $\E$ contains a 2-edge and a 3-edge.  

Since $|\E_{blue}| \geq |\E_{red}|$, $\E_{blue}$ must contain at least as many 2-edges as $\E_{red}$, and in order to satisfy (\ref{degreeCondition}), the difference between the number of 3-edges in $E_{red}$ and the number of 3-edges in $E_{blue}$ must be a multiple of 2.  

Notice that for every pair $e_1,e_2$ of $3$-edges (where $e_1$ and $e_2$ do not need to be unique), there are three 2-edges in $H$, $e_3,e_4,e_5$, such that
\[ \{e_1, e_2\} \sqcup_m \{e_3, e_4, e_5\} \]
is a balanced edge set. Let $\mathcal{B}_{2,3} \subset I_H$ be the set of all binomials arising from balanced edge sets of this form.  Then $f_{\E}$ is a linear combination of binomials in $\mathcal{B}_{2,3}$ and $f_{\E'}$, where $\E'_{blue}$ and $\E'_{red}$ contains the same number of 2-edges and exactly one 3-edge.

Since it suffices to consider primitive binomials, we will proceed inductively by showing that every primitive degree $n$ binomial in
\[\mathcal{B}_h:=\{f_{\E} \in I_H : |\E_{blue}|=|\E_{red}| \text{ and } 
\E_{blue}, \E_{red} \text{ contain exactly one 3-edge  each} \}\]
is a linear combination of binomials in $\mathcal{B}_h$ with degree less than $n$. 

Let $f_{\E} \in \mathcal{B}_h$ such that degree $f_{\E}=n>3$ and $f_{\E}$ is primitive. Let $e_1$ be the 3-edge in $\E_{red}$.  Since $f_{\E}$ is primitive, $e_1$ must intersect a 2-edge $e_2$ in $\E_{blue}$. Let $e_2=\{v_1,v_2\}$ where $v_1 \in e_1$. 

The edge $e_2$ intersects at most one other edge of $\E_{red}$ besides $e_1$. We will examine the possible intersections of $e_2$ and $\E_{red}$ in order to find splitting sets of $\E$ that satisfy one of the conditions listed in Proposition~\ref{thm:nonuniform}. For illustrations of Case 1 and Case 3 see Figures \ref{fig:case1} and \ref{fig:case3}.
In all three cases, we will construct $S$, $\Gamma_1$ and $\Gamma_2$ such that $S$ is a splitting set of $\E$ with an associated decomposition $(\varGamma_1, S, \varGamma_2)$ which satisfies the properties of condition $i)$ in Theorem~\ref{thm:nonuniform}. In fact, $f_{\E}$ will be a linear combination of $f_{\Gamma_1}$ and $f_{\Gamma_2}$, both of which have strictly lower degree than $f_{\E}$. Furthermore, since the blue and red parts of $\Gamma_1$ and $\Gamma_2$ will contain the same number of 2 and 3-edges, it follows that $f_{\Gamma_1}, f_{\Gamma_2} \in \mathcal{B}_h$. 

\emph{Case 1: The edge $e_1=e_2 \cup \{v_3\}=\{v_1, v_2, v_3\}$ for some $v_3 \in V(\E)$.}\\
Since $v_3 \notin e_2$ and $|\E_{blue}|=|\E_{red}|$, there must be a 2-edge $e_3 \in \E_{red}$ such that $v_3 \notin e_3$ in order for (\ref{degreeCondition}) to hold.  Let $e_3=\{v_4,v_5\}$ and $e_4=\{v_3, v_4,v_5\}$. The sets $S$, $\Gamma_1$ and $\Gamma_2$ in this case are:
\begin{align*}
S&=\{e_4\}\\
\varGamma_1&=(\E_{blue}-\{e_2\}) \sqcup_m ((\E_{red}-\{e_1, e_3\})\sqcup \{e_4\})\\
\varGamma_2 &=\{e_2, e_4\} \sqcup_m \{e_1, e_3\}.
\end{align*}

\begin{figure}[h]
  \hfill
  \hfill
  \begin{minipage}[t]{.45\textwidth}
    \begin{center}  
\includegraphics[width=1.5in]{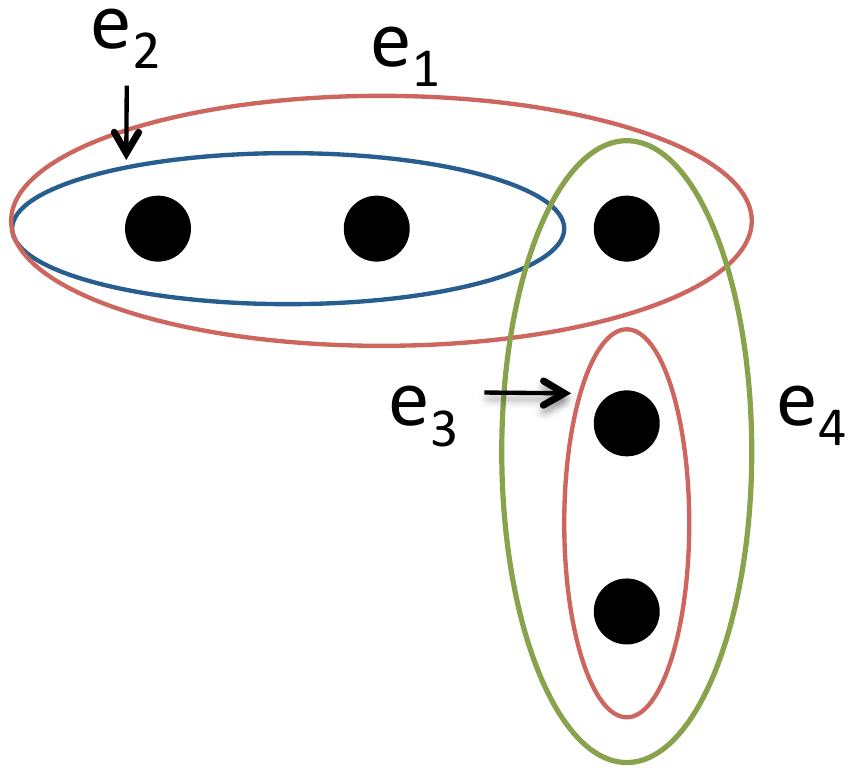}
\caption{Case 1}
\label{fig:case1}
    \end{center}
  \end{minipage}
  \hfill
  \begin{minipage}[t]{.45\textwidth}
    \begin{center}  
\includegraphics[width=1.2in]{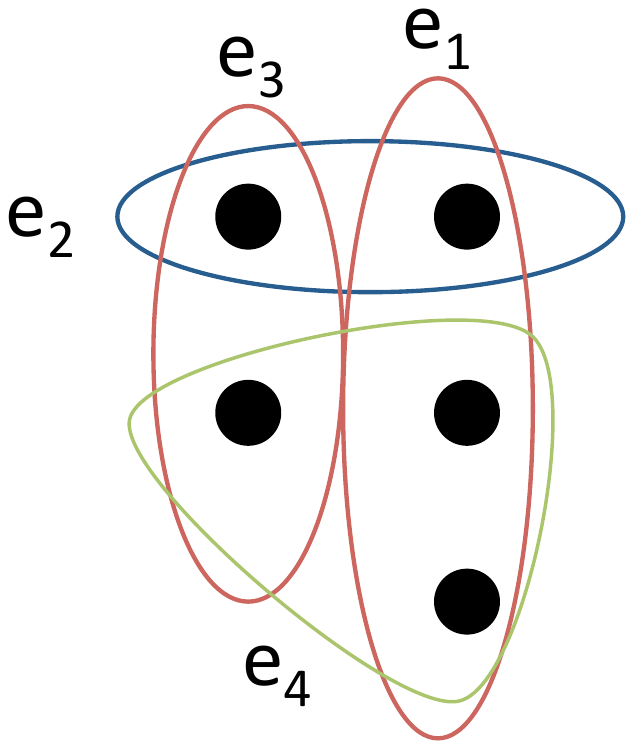}
\caption{Case 3}
\label{fig:case3}
    \end{center}
  \end{minipage}
  \hfill
  \hfill
\end{figure}

\emph{Case 2:  The edge $e_1=\{v_1,v_3,v_4\}$ for some $v_3,v_4 \in V(\E)$ and there is a 2-edge $e_3 \in \E_{red}$ such that $e_3=\{v_2,v_3\}$}.

Since $v_3 \notin e_2$, $\deg_{blue}(v_3;\E)=\deg_{red}(v_3;\E) \leq n-1$ and, thus, there exists a 2-edge $e_4 \in \E_{red}$ such that $v_3 \notin e_4$.  Let $e_4=\{v_5,v_6\}$.

Now let $e_5=\{v_3, v_4, v_5\}$ and $e_6=\{v_3,v_6\}$. The sets $S$, $\Gamma_1$ and $\Gamma_2$ in this case are:
\begin{align*}
S&=\{e_5,e_6\}\\
\varGamma_1&=(\E_{blue}-\{e_2\}) \sqcup_m ((\E_{red}-\{e_1, e_3,e_4\})\sqcup \{e_5,e_6\})\\
\varGamma_2 &=\{e_2,e_5,e_6\} \sqcup_m \{e_1, e_3, e_4\}.
\end{align*}

\emph{Case 3: There is a 2-edge $e_3 \in \E_{red}$ such that $v_2 \in e_3$ and $e_2 \cap e_3 = \emptyset$}.\
In this case, let $e_4=(e_1-\{v_1\}) \cup (e_3 - \{v_2\})$. The sets $S$, $\Gamma_1$ and $\Gamma_2$ in this case are:
\begin{align*}
S&=\{e_4\}\\
\varGamma_1&=(\E_{blue}-\{e_2\}) \sqcup_m ((\E_{red}-\{e_1, e_3\})\sqcup \{e_4\})\\
\varGamma_2 &=\{e_2,e_4\} \sqcup_m \{e_1, e_3\}.
\end{align*}
\end{proof}

\section*{Acknowledgements} The authors would like to thank the anonymous referee for carefully reading the previous version of this manuscript and thus allowing us to greatly improve Section~\ref{sec:nonuniform}. We are also grateful to Despina Stasi and Seth Sullivant for helpful discussions.



\begin{thebibliography}{10}

\bibitem{AA} A.~Takemura and S.~Aoki. {Some characterizations of minimal Markov
basis for sampling from discrete conditional distributions}, Ann. Inst. Statist. Math.
{\bf 56} 1 (2004), 1Ð17.

\bibitem{ATY} S.~Aoki, A.~ Takemura, R.~Yoshida. {Indispensable monomials of toric ideals and Markov bases}, Journal of Symbolic Computation {\bf 43} 6Ð7 (2008) 490-507.

\bibitem{CKT} H.~Charalambous, A.~Katsabekis, and A.~Thoma. {Minimal systems of binomial generators and the indispensable complex of a toric ideal}, Proceedings of the American Mathematical Society {\bf 135} (2007) 3443-3451.


\bibitem{DLO} J.~De Loera and S.~Onn,  {Markov bases of three-way tables are arbitrarily complicated}, J. Symb. Comput. {\bf 41} 2 (February 2006) 173-18.

\bibitem{DS} M.~Develin and S. Sullivant. {Markov bases of binary graph models}, Annals of Combinatorics  {\bf 7} (2003) 441-466.


\bibitem{DiacSturm}
P.~Diaconis and B.~Sturmfels. {Algebraic algorithms for sampling  from conditional distributions}, Ann. Statist. \textbf{26}, no.~1,  363--397 (1998) 

\bibitem{DobraSul} A.~Dobra and S.~Sullivant. A divide-and-conquer algorithm for generating Markov bases of multi-way tables.   Computational Statistics  19 (2004), 347-366


\bibitem{algStatBook} M.~Drton, B.~Sturmfels and S.~Sullivant. \emph{Lectures on algebraic  statistics}, Oberwolfach Seminars \textbf{39}, Birkh\"auser  (2009)

\bibitem{GRV} I.~Gitler, E.~Reyes, R.~Villarreal. {Ring graphs and toric ideals}, Electronic Notes in Discrete Mathematics {\bf 28} 1 (2007) 393-400.

\bibitem{HSull} S.~Ho\c{s}ten and S.~ Sullivant. {A finiteness theorem for Markov bases of hierarchical models}, J. Comb. Theory Ser. A {\bf 114} 2 (2007) 311-321. 

\bibitem{Oed} L.~Oeding. {Set-theoretic defining equations of the tangential variety of the
Segre variety}, J. Pure and Applied Algebra, {\bf 215} (2011) 1516-1527.
 
 \bibitem{OH}  H.~Ohsugi and T.~ Hibi. Toric ideals generated by quadratic binomials, Journal of Algebra {\bf 218} (1999), 509-527.

\bibitem{OH2005} H.~Ohsugi and T.~Hibi. {Indispensable binomials of finite graphs}, J. Algebra Appl. {\bf 4} (2005), no 4, 421-434.

\bibitem{PS} S.~Petrovi\'{c} and D. Stasi. Toric algebra of hypergraphs. Preprint: \href{http://arxiv.org/abs/1206.1904}{arXiv:1206.1904}

\bibitem{RTT} E.~Reyes, C.~Tatakis, A.~Thoma. {Minimal generators of toric ideals of graphs}, Adv. in Appl. Math {\bf 48} (2012), no. 1, 64-67

\bibitem{GB} B.~Sturmfels. \emph{Gr\"{o}bner bases and convex polytopes}, University Lecture Series, 8. American Mathematical Society, 1996.

\bibitem{phylo} B.~Sturmfels and S.~Sullivant. {Toric ideals of phylogenetic invariants},  Journal of Computational Biology {\bf 12} (2005) 204-228.

\bibitem  {SZ} B.~Stumfels and P.~Zwiernik. Binary cumulant varieties, Annals of Combinatorics, to appear.

\bibitem{TT} C.~Tatakis andA.~Thoma. {On the universal Gr\"{o}bner basis of toric ideals of graphs}, Journal of Combinatorial Theory, Series A, {\bf 118} (2011) 1540-1548

\bibitem{Vill95}
R.~Villarreal. Rees algebras of edge ideals, Communications in Algebra, {\bf 23} (9), 3513--3524 (1995)

\bibitem{Vill}
R.~Villarreal, \emph{Monomial Algebras}. Monographs and Textbooks in Pure and Applied Mathematics {\bf 238}, Marcel Dekker, Inc., New York, 2001.
\end{thebibliography}
\end{document}